\newcommand{\ph}{\operatorname{\mathcal A}}
\newcommand{\diff}{\operatorname{Diff}}
\newcommand{\nuh}{\operatorname{Nuh}}
\newcommand{\N}{\mathbb N}
\newcommand{\PP}{\mathcal{P}}
\newcommand{\eps}{\varepsilon}
\newcommand{\transv}{\pitchfork}
\newcommand{\R}{\mathbb R}
\newcommand{\ess}{\text{ess}}
\newtheorem{theorem}{Theorem}[section]
\newtheorem{definition}[theorem]{Definition}
\newtheorem{proposition}[theorem]{Proposition}
\newtheorem{lemma}[theorem]{Lemma}
\newtheorem{conjecture}[theorem]{Conjecture}
\newtheorem{question}[theorem]{Question}
\newtheorem{corollary}[theorem]{Corollary}
\newtheorem{remark}[theorem]{Remark}
\title{Genericity of non-uniform hyperbolicity in dimension 3}
\author{Jana Rodriguez Hertz}
\begin{document}
\begin{abstract}
For a generic conservative diffeomorphism of a closed connected 3-manifold $M$, the Oseledets splitting is a globally dominated splitting. Moreover, either all Lyapunov exponents vanish almost everywhere, or else the system is non-uniformly hyperbolic and ergodic.\par
This is the 3-dimensional version of the well-known result by Ma\~{n}\'{e}-Bochi \cite{manhe1983, bochi2002}, stating that a generic conservative surface diffeomorphism is either Anosov or all Lyapunov exponents vanish almost everywhere. This result inspired and answers in the positive in dimension 3 a conjecture by Avila-Bochi \cite{avila_bochi2009}.
\end{abstract}
\maketitle
\section{Introduction}
Let $m$ be a smooth volume form on a closed connected Riemannian manifold $M$. Given a conservative diffeomorphism $f$ on $M$, the Lyapunov exponents evaluate  the exponential growth of the norm of the derivative along the direction of a given vector. Namely, for any tangent vector $v$ in $T_xM$,  the associated {\em Lyapunov exponent} is 
\begin{equation}\label{equation lyap exponent}
\lambda(x,v)=\lim_{n\to\pm\infty}\frac{1}{n}\log \|Tf^n(x)v\|
\end{equation}
This amount is not necessarily well defined; however, Oseledets \cite{oseledec} proved in the sixties that for $m$-almost every point in the manifold, there exists a splitting of the tangent bundle $$T_xM=E_1(x)\oplus\dots\oplus E_{l(x)}(x)$$ and numbers $\hat\lambda_1(x)>\dots>\hat\lambda_{l(x)}(x)$ such that for each vector in the subspace $E_i(x)$ its associated Lyapunov exponent is $\hat\lambda_i(x)$.\par
As it is expectable from its generality, the variation of the Oseledets splitting with respect to $x$ is only measurable. The limits in the formula (\ref{equation lyap exponent}) are not uniform, but they are also measurable with respect to $x$. \par
In 1983, Ma\~{n}\'{e} suggested that $C^1$-generically the Oseledets splitting and the Lyapunov exponents had a more regular behavior \cite{manhe1983}. He went further to propose a program.  \par
In \cite{manhe1983}, Ma\~n\'e announced that the generic area-preserving diffeomorphism of a compact two-dimensional manifold are either Anosov, or else all Lyapunov exponents vanish almost everywhere. A complete proof of this fact was given only almost 20 years later by Bochi in \cite{bochi2002}, see Theorem \ref{teo.manhe.bochi}. In particular, the following generic dichotomy holds: either all Lyapunov exponents vanish almost everywhere or the diffeomorphism is ergodic and non-uniformly hyperbolic (no Lyapunov exponent vanishes).\par
In this work, we obtain a result analogous to Ma\~n\'e-Bochi's in the 3-dimensional setting: 

\begin{theorem}\label{teo.jana}
A $C^1$-generic conservative diffeomorphism $f$ of a closed connected 3-manifold $M$, satisfies one of the following alternatives:
\begin{enumerate}
\item all Lyapunov exponents of $f$ vanish  almost everywhere, or
\item $f$ is {\em coarsely} partially hyperbolic , non-uniformly hyperbolic, and ergodic.
\end{enumerate}
\end{theorem}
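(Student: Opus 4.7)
The plan is to follow the spirit of Mañé--Bochi, extending the two-dimensional argument to three dimensions by combining $C^1$-perturbation techniques for Lyapunov exponents with the theory of dominated splittings and generic ergodicity results for partially hyperbolic-like systems.

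First, I would invoke Bochi's higher-dimensional dichotomy for the integrated Lyapunov exponents: $C^1$-generically, either the sums $\int \lambda_i\,dm$ of any collection of exponents can be driven arbitrarily close to each other by a small conservative perturbation, or else along invariant measures there is a dominated splitting separating the corresponding Oseledets blocks. Applied to a generic $f$, this yields the fundamental alternative between case (1) and the existence of a nontrivial dominated splitting over the support of some invariant set.

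Second, I would upgrade such a splitting to a globally dominated splitting on all of $M$. In the $C^1$-generic conservative setting, periodic orbits are dense in $M$ (generic Kupka--Smale plus Pugh's closing lemma), and dominated splittings along the periodic skeleton extend by continuity to their closure, hence to $M$. The possible signatures in dimension $3$ are $1+2$, $2+1$ and $1+1+1$; in each case I would identify a distinguished ``central'' direction and show that the resulting structure is the coarsely partially hyperbolic decomposition required by statement (2), matching the abstract's claim that the Oseledets splitting is globally dominated.

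Third, I would establish non-uniform hyperbolicity. Given a nontrivial globally dominated splitting, if the central exponent were to vanish on a positive-measure set, a further Bochi-type perturbation exploiting the domination to protect the splitting could be used either to collapse the central block into the adjacent ones (reducing to case (1)), or to refine the splitting further, eventually ruling out any persistent zero exponent. Ergodicity would then be extracted from the already-available $C^1$-generic ergodicity results for conservative diffeomorphisms carrying a dominated splitting with nonzero center exponent in dimension $3$, in the vein of Avila--Crovisier--Wilkinson and Rodriguez Hertz--Rodriguez Hertz--Tahzibi--Ures.

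The main obstacle I expect is the passage from the Bochi dichotomy, which a priori controls only integrated exponents on invariant measures, to a bona fide globally defined dominated splitting over all of $M$, together with the delicate perturbative separation argument that removes a persistent zero central exponent while preserving both the domination and the volume form. The ergodicity step, although nontrivial, should follow relatively cleanly from existing generic machinery once the coarsely partially hyperbolic structure is in place.
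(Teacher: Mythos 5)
Your overall plan is in the right spirit, and several of its ingredients do match the paper's: using a Bochi--Viana type theorem to produce a dominated (indeed $l$-partially hyperbolic) Oseledets splitting over a positive-measure set, then invoking generic ergodicity for partially hyperbolic diffeomorphisms (Rodriguez Hertz--Rodriguez Hertz--Ures) and a Baraviera--Bonatti-type perturbation to kill the central exponent. The paper organizes this around a partition of the regular set into $B_{000},B_{--+},B_{-++},B_{-0+}$ and reduces everything to showing that $m(B_{-0+}(f))=0$ generically (Proposition~\ref{proposition generic B}); your integrated-exponent formulation is close enough in content.

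However, there is a genuine gap in your second step. You assert that ``dominated splittings along the periodic skeleton extend by continuity to their closure, hence to $M$.'' That conclusion does not follow: continuity of an $l$-dominated splitting only extends it to the \emph{closure of the set where it already lives}, which in general is a proper compact subset $K\subsetneq M$. Density of periodic orbits in $M$ is of no help either, because the periodic orbits that carry the given splitting need not be dense; they lie in the partially hyperbolic set, not everywhere. Passing from ``a positive-measure $l$-partially hyperbolic set $K$'' to ``$K=M$'' is precisely the hard part of the paper and occupies Sections~\ref{section proof proposition} and~\ref{section proof of lemma final}: one shows (Theorem~\ref{theorem.dificil}) that if $0<m(K)<1$ then $K$ contains two periodic points whose strong stable/unstable manifolds have quasi-transverse (hence non-transverse) intersections, which is forbidden for a generic (Kupka--Smale-type) diffeomorphism. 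This requires building an $su$-lamination of accessibility classes inside $K$ (Proposition~\ref{proposition.gamma.laminated}), analyzing the compact-leaf and no-compact-leaf cases, the gut/interstitial decomposition of the complementary regions, a locally invariant center pre-lamination (Proposition~\ref{proposition.HPS}), and the Anosov closing lemma. None of this is captured by the continuity argument you propose, and without it your proof collapses: you would only get a globally dominated splitting over $\nuh(f)$ (which Avila--Bochi already provides) rather than over all of $M$, and the partial hyperbolicity needed to apply Rodriguez Hertz--Rodriguez Hertz--Ures and Baraviera--Bonatti would not be available.
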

A diffeomorphism $f$ is {\em coarsely partially hyperbolic} if the tangent bundle admits a dominated splitting of the form 
$$TM=E^{cs}\oplus E^{u}\qquad \text{or}\qquad TM=E^{s}\oplus E^{cu}$$
where $E^s$ and $E^{u}$ are hyperbolic, and $E^{cs}$ and $E^{cu}$ are volume hyperbolic, that is, volume contracting or volume expanding respectively. \par
Let us call $\nuh(f)$ the {\em Pesin region} of $f$,  the set where there is non-uniform hyperbolicity, namely:
$$\nuh(f)=\{x:\text{all Lyapunov exponents of $x$ are different from zero}\}$$
\par
A first generalization of Ma\~n\'e-Bochi result was obtained by Bochi and Viana in \cite{bochi_viana2005}: they prove that generically among conservative systems, the Oseledets splitting of almost every orbit is dominated, see more details in Theorem \ref{teo.bochi.viana}. An invariant splitting $T_{o(x)}M=E_{o(x)}\oplus F_{o(x)}$ is dominated over the orbit of $x$ if there is an integer $l\geq 1$ such that all unit vectors $v_E\in E_x$ and $v_F$ in $F_x$ satisfy:
$$\|Df^l(x)v_E\|\leq \frac12\|Df^l(x)v_F\|$$
This high-impact result, however, does not preclude the coexistence of different behaviors such as the vanishing of all Lyapunov exponents and a non-trivial Pesin region. In particular, even though it brings useful information in specific cases, no information is obtained about genericity of ergodicity, or the extension of the Oseledets splitting to a dominated splitting defined over the whole manifold  $M$. \par
More recently, Avila and Bochi \cite{avila_bochi2009} improved this result by showing that for a generic conservative diffeomorphism, either the Pesin region has measure zero, or else it is a dense ergodic component, and the Oseledets splitting extends to a dominated splitting over the manifold, see Theorem \ref{theorem avila bochi}. This is the first result about generic global dominancy of the Oseledets splitting since \cite{manhe1983, bochi2002}. \par
It could happen, {\em a priori}, however, that the Pesin region does not have total measure. 
Indeed, there could exist  a positive measure set of points with at least one zero Lyapunov exponent. 
Nevertheless, the existence of a global dominated splitting implies that there are at least two exponents different from zero amost everywhere, one that is positive and the other one that is negative. \par

As a by-product of Bochi-Viana's \cite{bochi_viana2005} and Avila-Bochi's \cite{avila_bochi2009} results, we obtain from Theorem \ref{teo.jana}:
\begin{corollary}
 $C^{1}$-generically among conservative diffeomorphisms of closed connected 3-manifolds, the finest Oseledets splitting is globally dominated. 
\end{corollary}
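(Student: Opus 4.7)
My plan is to obtain the corollary as a direct consequence of Theorem~\ref{teo.jana} together with the results of Bochi-Viana \cite{bochi_viana2005} and Avila-Bochi \cite{avila_bochi2009} recalled in the introduction. Let $\mathcal{R}$ be the intersection of the three residual sets of $C^1$-conservative diffeomorphisms of $M$ furnished by these statements; since a countable intersection of residual sets is residual, it suffices to verify the conclusion for an arbitrary $f\in\mathcal{R}$.

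For such an $f$, I would argue case by case using the dichotomy of Theorem~\ref{teo.jana}. In alternative (1), all Lyapunov exponents vanish $m$-almost everywhere, so $l(x)=1$ $m$-a.e.\ and the finest Oseledets splitting is the one-block decomposition $T_xM$ itself, which is tautologically globally dominated, so there is nothing to prove. In alternative (2), Theorem~\ref{teo.jana} asserts that $f$ is ergodic and non-uniformly hyperbolic, so $\nuh(f)$ is an invariant set of positive measure and hence, by ergodicity, has full $m$-measure. I would then feed this into Theorem~\ref{theorem avila bochi}: having ruled out the possibility that the Pesin region has measure zero, Avila-Bochi's theorem produces a dominated splitting on all of $M$ that extends the Oseledets splitting on $\nuh(f)$.

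The only point requiring a word of care is matching the splitting delivered by Avila-Bochi to the \emph{finest} Oseledets splitting, rather than some coarsening of it. By Theorem~\ref{teo.bochi.viana} the splitting that is dominated along $m$-almost every orbit is already the finest Oseledets splitting, and the extension procedure in Theorem~\ref{theorem avila bochi} preserves the number of bundles by continuity on the full-measure invariant set. Since every genuine dynamical obstacle, in particular global domination, ergodicity and non-vanishing of all Lyapunov exponents in case (2), has already been packaged into Theorem~\ref{teo.jana}, I do not anticipate any substantive technical difficulty beyond this book-keeping verification.
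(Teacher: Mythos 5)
Your high-level plan is the right one and matches the paper's: intersect the residual sets from Theorem~\ref{teo.jana}, Theorem~\ref{teo.bochi.viana} and Theorem~\ref{theorem avila bochi}, and then treat the two alternatives of Theorem~\ref{teo.jana}. Alternative (1) is indeed trivial as you say. The gap is in alternative (2), precisely at the point you flag as ``the only point requiring a word of care.''

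Your resolution --- that ``the extension procedure in Theorem~\ref{theorem avila bochi} preserves the number of bundles by continuity on the full-measure invariant set'' --- attributes to Avila--Bochi something their theorem does not provide. Theorem~\ref{theorem avila bochi} extends only the \emph{zipped} splitting $T_xM=E^+(x)\oplus E^-(x)$, which in dimension $3$ has exactly two blocks (of dimensions $1$ and $2$) whenever $\nuh(f)$ has full measure. When, say, $B_{--+}(f)$ has full measure and the two negative exponents are distinct, the finest Oseledets splitting has three one-dimensional blocks, and there is no ``extension procedure'' in Theorem~\ref{theorem avila bochi} that produces the extra decomposition of the two-dimensional bundle $E^-$ into $E_{\lambda_2}\oplus E_{\lambda_3}$ globally. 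So the sentence as written does not close the gap it acknowledges.

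The mechanism the paper actually uses (in the proof of Proposition~\ref{proposition generic B}) is different: once ergodicity is known, $m$-almost every orbit is dense (because $m$ is a smooth volume with full support on a connected manifold), and Bochi--Viana gives that the finest Oseledets splitting is $l(x)$-dominated along almost every orbit. Picking a single point $x$ with both properties, the $l(x)$-dominated three-block splitting along the dense orbit of $x$ extends by continuity to the closure, which is all of $M$. It is this density-of-orbits step, together with the along-orbit domination from \cite{bochi_viana2005}, that upgrades the two-block Avila--Bochi splitting to a global domination of the \emph{finest} splitting; your proposal never invokes it, and without it the matching you call ``book-keeping'' does not go through.

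Everything else in your proposal is sound; replacing the sentence invoking Avila--Bochi's ``extension procedure'' by the density-of-orbits argument above (citing Theorem~\ref{teo.bochi.viana} rather than Theorem~\ref{theorem avila bochi} for the extension) would repair the proof and bring it in line with the paper's.
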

The core of the proof of Theorem \ref{teo.jana} is to show that if a diffeomorphism has a positive measure set where the three Lyapunov exponents take, respectively, positive, negative and zero signs, then owe can perturb it so that either this set vanishes, or the perturbation is partially hyperbolic on the whole manifold, see the sketch of the proof below (Subsection \ref{subsection.sketch.of.the.proof}). Since the set of partially hyperbolic diffeomorphisms is open, we use that ergodicity is generic among $C^1$-partially hyperbolic diffeomorphisms \cite{rhrhu2008}, so that we obtain a generic set of partially hyperbolic diffeomorphisms with constant Lyapunov exponents. The techniques of Baraviera-Bonatti \cite{baraviera_bonatti2003} then allow us to remove the zero Lyapunov exponent.\par
Theorem \ref{teo.jana} is the 3-dimensional version of Theorem \ref{teo.manhe.bochi} by Ma\~n\'e-Bochi. This theorem, and Theorem \ref{theorem avila bochi} by Avila-Bochi inspired them the following general conjecture:
\begin{conjecture}  [Avila-Bochi \cite{avila_bochi2009}]\label{conjecture.avila.bochi} For a $C^{1}$-generic diffeomorphism $f$ of a closed connected manifold, either one of the following holds:
\begin{enumerate}
  \item all Lyapunov exponents vanish almost everywhere, or else
  \item the Pesin region $\nuh(f)$ has full measure, $f$ is ergodic and the Oseledets splitting extends to a global dominated splitting.
\end{enumerate}
\end{conjecture}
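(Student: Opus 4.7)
The plan is to extend the strategy of Theorem~\ref{teo.jana} to arbitrary dimension, taking Avila--Bochi's theorem as the starting point. That result already delivers, generically, either $\nuh(f)$ has measure zero, or else $\nuh(f)$ is a dense ergodic component on which the Oseledets splitting extends to a global dominated splitting $TM=E_1\oplus\cdots\oplus E_k$. What remains is (a) to upgrade the first case to \emph{all} Lyapunov exponents vanishing almost everywhere, and (b) to upgrade the second to $\nuh(f)$ having \emph{full} measure. Both upgrades amount to ruling out the existence of a positive-measure invariant set on which the Lyapunov spectrum has mixed signature, meaning at least one zero exponent coexisting with a nonzero one.

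The core reduction would follow the three-dimensional strategy sketched in Subsection~\ref{subsection.sketch.of.the.proof}: starting from such a mixed-signature set, perturb $f$ so that either this set disappears, or the perturbation acquires a global coarsely partially hyperbolic structure $TM=E^s\oplus E^c\oplus E^u$. Concretely, one would group the Oseledets subspaces of the globally dominated splitting according to the sign of the exponent and use the Bochi--Viana/Avila--Bochi mechanism to promote the dominated decomposition into uniform contraction on the negative block and uniform expansion on the positive block, absorbing the zero-exponent directions into a central bundle $E^c$. Once in the partially hyperbolic regime, one would invoke generic ergodicity of $C^1$-partially hyperbolic systems, proved in \cite{rhrhu2008} when $\dim E^c=1$, to obtain constant Lyapunov exponents, and then apply the Baraviera--Bonatti \cite{baraviera_bonatti2003} perturbation technique to remove the zero central exponent, contradicting the standing assumption and forcing the Avila--Bochi dichotomy to collapse onto the two cases of the conjecture.

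The principal obstacle is the behavior of the center bundle when $\dim E^c\geq 2$. Baraviera--Bonatti's method is tailored to one-dimensional centers; in higher-dimensional centers one must simultaneously eliminate the existing zero exponents without creating new ones, which is a substantial open problem. A second obstacle is that generic ergodicity of $C^1$-partially hyperbolic diffeomorphisms in arbitrary dimension is only known under additional hypotheses, typically involving the accessibility class being open and dense. These two issues are precisely what restricts Theorem~\ref{teo.jana} to dimension three, where $\dim E^c=1$ is forced and \cite{rhrhu2008} applies directly, and overcoming them in generality is what would be needed to settle the conjecture.
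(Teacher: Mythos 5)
The statement you were asked to prove is a \emph{conjecture}, not a theorem; the paper does not prove it, and explicitly says so, noting only that Avila, Crovisier and Wilkinson have announced a proof by methods different from those of this paper. What the paper actually proves is the three-dimensional case, Theorem~\ref{teo.jana}. So there is no in-paper proof to compare against, and the honest answer is precisely what you gave: a description of how the three-dimensional strategy would have to be extended, together with the reasons it does not go through in higher dimension.

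Your diagnosis of the obstacles is accurate as far as it goes. Baraviera--Bonatti is indeed a one-dimensional-center tool, and generic $C^1$ ergodicity of partially hyperbolic diffeomorphisms in arbitrary center dimension is not known (Hertz--Hertz--Ures \cite{rhrhu2008} is used here exactly because $\dim E^c=1$). One further obstruction worth naming, since it is arguably the hardest step of the paper's argument and is also tied to the one-dimensional center: the globalization step, Theorem~\ref{theorem.dificil} and Corollary~\ref{teo.C.r.genericity.of.ph}, which upgrades a positive-measure partially hyperbolic set to a globally partially hyperbolic diffeomorphism, crucially uses that $\dim E^c=1$ (the accessibility classes become codimension-one immersed surfaces, the $su$-patches separate small balls, periodic strong manifolds cannot intersect transversally for dimension-count reasons, and the Haefliger/gut-and-interstitial machinery is a codimension-one theory). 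Your sketch slides past this by saying one should ``perturb $f$ so that either this set disappears, or the perturbation acquires a global coarsely partially hyperbolic structure,'' but that is exactly the delicate part, and it would need an entirely new argument when $\dim E^c\geq 2$. With that addition your assessment matches the paper's own view that the conjecture remains beyond the methods presented here.
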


Very recently, Avila, Crovisier and Wilkinson have announced a proof of this conjecture, by different methods from the ones presented here.\par
It is interesting to note that in dimension 2, the Ma\~n\'e-Bochi Theorem implies that for all surfaces other than the 2-torus, the generic situation is that all Lyapunov exponents vanish almost everywhere, whereas in the 2-torus, the Pesin region has full measure generically in the set of Anosov diffeomorphisms. However, in the 2-torus there is also a non-empty open set of diffeomorphisms inside which the generic system has vanishing Lyapunov exponents almost everywhere; this happens for instance, in the isotopy class of the identity.\par
In dimension 3, the following holds: for every 3-manifold there is a non-empty open set of conservative diffeomorphisms for  which generically all Lyapunov exponents vanish almost everywhere, as Grin showed in her Msc. thesis \cite{grin2010}. 

\begin{question}
  Is it true that generically in $\diff^1_m({\mathbb S}^3)$, all Lyapunov exponents vanish almost everywhere?
\end{question}
In \cite{avila_bochi2009} it is shown that even-dimensional spheres ${\mathbb S}^{2k}$ do not admit a globally dominated spitting. This implies that $C^{1}$-generically in ${\mathbb S}^{2k}$, the Pesin region has measure zero, as follows from Theorem \ref{theorem avila bochi} below. \par
The following is also unknown:
\begin{question}
  For which manifolds $M^3$ is there a non-empty open set ${\mathcal U}\subset\diff^1_m(M)$ such that generically in ${\mathcal U}$ the Pesin region has total measure?
\end{question}
 In Grin's thesis, it is also proven that for every $n$-manifold, there is an open set ${\mathcal U}\subset \diff^1_m(M)$, such that for a generic $f$ in ${\mathcal U}$, $m(\nuh(f))=0$. In particular the Pesin region can not have full measure for all diffeomorphisms in a generic subset of $\diff^{1}_{m}(M)$ on any manifold $M$. \newline\par
A key result in proving Theorem \ref{teo.jana} is the following:

\begin{theorem} \label{theorem.dificil} Let $f\in \diff^{r}_{m}(M)$, with $r>1$. For any partially hyperbolic compact invariant set $K$ such that $0<m(K)<1$ and $\dim E^{c}=1$, there are two periodic points $p$ and $q$ in $K$ that are {\em strongly heteroclinically related}, that is
 $$W^{ss}(p)\cap W^{uu}(q)\ne\emptyset\qquad\text{and}\qquad W^{uu}(p)\cap W^{ss}(q)\ne\emptyset$$
 and the intersections are {\em quasi-transverse}, that is, for each $z\in W^{ss}(p)\cap W^{uu}(q)$, the intersection space is trivial: $T_{z}W^{ss}(p)\cap T_{z}W^{uu}(q)=\{0\}$. An analogous statement holds for $z\in W^{uu}(p)\cap W^{ss}(q)$.
\end{theorem}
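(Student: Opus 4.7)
The plan is to combine Pesin theory (available because $r>1$) with the geometry of the partial hyperbolicity on $K$ and the measure constraint $0<m(K)<1$. On a $3$-manifold with $\dim E^c=1$, the splitting must be $TM|_K=E^s\oplus E^c\oplus E^u$ with $\dim E^s=\dim E^u=1$, so $W^{ss}$ and $W^{uu}$ are one-dimensional. The first step is to produce periodic points inside $K$: normalize $m|_K$ to an $f$-invariant probability $\mu$ (well-defined since $K$ is invariant) and apply the Katok closing lemma to each ergodic component of $\mu$. This yields (non-uniformly) hyperbolic periodic points in $K$ whose strong stable and strong unstable manifolds are tangent to $E^s$ and $E^u$, with local size uniformly bounded below on a subset of $K$ of positive $\mu$-measure.

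The crucial step exploits $m(M\setminus K)>0$. Absolute continuity of the strong foliations (which needs $r>1$) forces the typical $W^{ss}$- and $W^{uu}$-leaf through a point of $K$ to cross the measure-theoretic boundary of $K$ with positive conditional mass. Combined with Poincar\'e recurrence for $\mu$ and the topological structure of partial hyperbolicity, this yields that the strong unstable leaf of one of the periodic points from the first step accumulates on the strong stable leaf of another, and symmetrically. The one-dimensionality of $E^c$ leaves just enough ambient room for two one-dimensional strong leaves to meet quasi-transversally in a $3$-manifold, and an inclination-lemma ($\lambda$-lemma) argument along the center direction upgrades the accumulation to actual heteroclinic intersections.

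The quasi-transverse condition is then automatic at any $z\in W^{ss}(p)\cap W^{uu}(q)$: the tangent spaces $T_zW^{ss}(p)$ and $T_zW^{uu}(q)$ are the extensions of $E^s$ and $E^u$ along the respective strong leaves, and the domination between these two line bundles prevents them from coinciding. The principal obstacle I anticipate is to secure \emph{both} heteroclinic relations for the \emph{same} pair $p,q$: a one-sided intersection follows fairly directly from recurrence, but the two-sided relation requires placing $p$ and $q$ inside a common $su$-accessibility class of $\mu$. This will call for a Hopf-chain argument exploiting the positive measure of $K$, carried out inside a single ergodic component, and keeping the construction symmetric between $W^{ss}$ and $W^{uu}$ is where the delicate work will lie.
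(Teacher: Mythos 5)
Your plan diverges from the paper's proof in a way that matters, and contains a step that is actually false, so this is a genuine gap rather than an alternative route.

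The decisive wrong turn is the claim that absolute continuity ``forces the typical $W^{ss}$- and $W^{uu}$-leaf through a point of $K$ to cross the measure-theoretic boundary of $K$.'' The opposite is true, and it is the opposite statement that drives the paper's proof: after replacing $K$ by its essential closure (which has the same measure), one shows that for \emph{every} $x\in K$ the full strong leaves $W^{ss}(x)$ and $W^{uu}(x)$ are contained in $K$ (Lemma \ref{lemma.K(f,l).su.saturado}, a consequence of absolute continuity of the Pesin manifolds, which is where $r>1$ is used). Consequently every accessibility class of a point of $K$ sits inside $K$. Because $K$ is compact and $m(K)<1$, $K$ cannot be open in $M$, so not every accessibility class in $K$ can be open; the set of non-open ones forms a compact lamination $\Lambda$ by $su$-surfaces (Proposition \ref{proposition.gamma.laminated}). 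The pair $p,q$ is then produced from the dynamics \emph{on a boundary leaf} $\Gamma$ of $\Lambda$, where $f\vert_\Gamma$ is hyperbolic: if $\Lambda$ has a compact leaf one gets an invariant compact boundary leaf carrying Anosov dynamics; otherwise one uses the gut/interstitial decomposition and a Baire argument to find a recurrent point in $\Gamma$ to which the Anosov closing lemma applies. In both cases one obtains periodic points in $\Gamma$ arbitrarily close to each other in the intrinsic topology, and closeness in $\Gamma$ \emph{automatically} gives the two-sided quasi-transverse heteroclinic relation, because the $s$- and $u$-leaves inside $\Gamma$ meet at uniformly bounded angles. Thus the symmetry issue you flag as the delicate point dissolves once you see that both intersections come from a single hyperbolic surface dynamic, not from two separate recurrence arguments that have to be matched up.

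Two further problems with your plan. First, the Katok closing lemma needs a hyperbolic measure, and the normalized restriction $\mu=m\vert_K/m(K)$ may well have a vanishing center exponent, so you cannot invoke it to produce periodic points in $K$ without some additional argument; in the paper the periodic points come from the hyperbolic dynamics on the boundary leaf, not from Katok. Second, you have silently specialized to a $3$-manifold, but the statement is proved for a manifold of arbitrary dimension $n$ with $\dim E^c=1$ (the paper emphasizes this point), and the paper's argument --- essential closure, $su$-saturation, lamination by codimension-one $su$-surfaces, boundary leaves --- is dimension-free in exactly the way your sketch is not. Your remark that quasi-transversality is automatic because $T_zW^{ss}(p)=E^s_z$ and $T_zW^{uu}(q)=E^u_z$ at an intersection point $z\in K$ is fine, and in fact the same observation appears implicitly in the paper; that part you can keep.
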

See more details in section 5. We emphasize that this is not a generic result, it holds for {\em every} diffeomorphism $f$ of a manifold of {\em any} dimension, as long as $\dim E^{c}=1$. As a consequence, we have
\begin{corollary}\label{teo.C.r.genericity.of.ph}
  For a generic $f$ in $\diff^r_m(M)$, with any $r\in[1, \infty]$, if there exists a compact partially hyperbolic set $K\subset M$ such that $m(K)>0$ and $\dim E^{c}=1$, 
  then $f$ is partially hyperbolic over $M$. Moreover,  if $K$ is $l$-partially hyperbolic, then $f$ is $l$-partially hyperbolic over $M$.
\end{corollary}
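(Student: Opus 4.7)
The argument is carried out for each fixed $l \in \N$ and then intersected over $l$. Let $\mathcal{PH}_l \subset \diff^r_m(M)$ denote the ($C^1$-open) set of diffeomorphisms that are $l$-partially hyperbolic on all of $M$. It suffices to show that the set
\[
\mathcal{B}_l := \{ f \in \diff^r_m(M) \setminus \mathcal{PH}_l : \exists\, K \subset M \text{ compact invariant, } l\text{-PH}, \dim E^c = 1,\ m(K) > 0 \}
\]
is meager. A first observation is that any witness $K$ for $f \in \mathcal{B}_l$ must satisfy $0 < m(K) < 1$: otherwise $m(K)=1$ forces $M \setminus K$ to be an open null set, hence empty (as $m$ is a smooth volume on a connected manifold), giving $K=M$ and $f \in \mathcal{PH}_l$, a contradiction. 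So Theorem~\ref{theorem.dificil} applies (for $r>1$) and furnishes strongly heteroclinically related periodic points $p, q \in K$ with quasi-transverse intersections.

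Next I introduce the functional
\[
\mu_l(f) := \sup\{ m(K) : K \subset M \text{ compact, } f\text{-invariant, } l\text{-PH}, \dim E^c = 1 \}
\]
and claim it is upper semicontinuous on $\diff^r_m(M)$. Given $f_n \to f$ and near-maximizing $K_n$, a Hausdorff subsequential limit $K^*$ is compact, $f$-invariant, and, by passing the uniform cone-field conditions defining $l$-partial hyperbolicity to the limit, $l$-PH for $f$ with $\dim E^c = 1$. Reverse Fatou applied to $\limsup K_n \subset K^*$ yields $m(K^*) \geq \limsup m(K_n)$, so $\mu_l(f) \geq \limsup \mu_l(f_n)$. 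By the general Baire-category result for bounded upper semicontinuous functions, the set $\mathcal{R}_l$ of continuity points of $\mu_l$ is residual in $\diff^r_m(M)$.

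The remaining step, and the main obstacle, is to show that $\mu_l(f) \notin (0,1)$ for every $f \in \mathcal{R}_l$; this yields $\mathcal{B}_l \cap \mathcal{R}_l = \emptyset$ and completes the proof for $r > 1$. Assume, for contradiction, that $0 < \mu_l(f) < 1$. Applying Theorem~\ref{theorem.dificil} to a near-maximizing $K$ produces $p, q$ and a quasi-transverse point $z \in W^{ss}(p) \cap W^{uu}(q)$ at which the codimension-one defect in $T_z W^{ss}(p) + T_z W^{uu}(q)$ is exactly $E^c(z)$. The plan is to leverage this structure to construct a $C^r$-small conservative perturbation $\tilde f$, supported in a small tubular neighborhood of the heteroclinic orbit, that tilts the unstable cone into the one-dimensional center direction along the orbit; this should destroy $l$-domination on a positive-measure tube and force $\mu_l(\tilde f) \leq \mu_l(f) - \eta$ for a definite $\eta > 0$, contradicting continuity of $\mu_l$ at $f$. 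The $r=1$ endpoint of the statement is then obtained by transferring the $C^r$-residual conclusion to a $C^1$-residual one via Avila's $C^r$-regularization theorem combined with the $C^1$-openness of $\mathcal{PH}_l$.
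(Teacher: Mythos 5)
Your approach is genuinely different from the paper's, and its central step is left as an unresolved plan with a real gap. Once Theorem~\ref{theorem.dificil} is available, the paper's proof for $r>1$ is a Kupka--Smale dimension count: Proposition~\ref{proposition kupka-smale} gives a residual set on which the strong invariant manifolds of distinct periodic points are transverse, and since $\dim W^{uu}(p)+\dim W^{ss}(q)=u+s=n-1<n$, transversality forces disjointness. Theorem~\ref{theorem.dificil}, by contrast, produces for any invariant $K$ with $0<m(K)<1$ a pair of periodic points in $K$ whose strong manifolds meet quasi-transversally, hence non-trivially --- a contradiction. So on that residual set no such $K$ exists, and if $m(K)>0$ then $m(K)=1$, whence $K=M$ (compactness plus full support of $m$). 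No perturbation is required; the non-genericity of the configuration is already what Kupka--Smale gives you.

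Your semicontinuity setup for $\mu_l$ is essentially Proposition~\ref{lema.pln.closed} in disguise (closedness of $\PP^{r}_{l,n}(M)$ is exactly upper semicontinuity of $\mu_l$), so that part is sound. But the step you yourself flag as the main obstacle --- showing $\mu_l(f)\notin(0,1)$ at continuity points --- is not filled in, and the sketched perturbation has a genuine problem: a perturbation supported in a small tube around the (measure-zero) heteroclinic orbit does not obviously remove positive measure from $PH(\tilde f,l)$, since $l$-domination is an orbitwise condition that can persist almost everywhere in $K$; moreover $\mu_l(\tilde f)$ is a supremum over \emph{all} invariant $l$-partially hyperbolic sets, and could be realized by a set unrelated to the perturbed $K$. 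You do not produce the uniform $\eta>0$ the argument needs. Finally, for $r=1$ you invoke the ``$C^1$-openness of $\mathcal{PH}_l$'', which is doubtful for a fixed $l$: the inequality (\ref{equation dominated splitting}) is non-strict, so $l$-partial hyperbolicity is a closed, not an open, condition. The paper instead uses $C^1$-\emph{closedness} of the set of $l$-partially hyperbolic diffeomorphisms, Corollary~\ref{corollary.closure.interior}, and Avila's regularization to transfer the $r>1$ result.
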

The precise meaning of partial hyperbolicity and $l$-partial hyperbolicity is given in Definition \ref{definition partially hyperbolic}.

\subsection{Sketch of the proof}\label{subsection.sketch.of.the.proof}
Let $M$ be a closed connected 3-dimensional manifold. For $f\in \diff^1_m(M)$, let $\lambda_1(x)\geq\lambda_2(x)\geq\lambda_3(x)$ be the Lyapunov exponents. Call ${\mathcal R}eg$ the set of {\em regular points} $x$ for which the limit (\ref{equation lyap exponent}) exists for every $v\in T_{x}M$. Recall that $m({\mathcal R}eg)=1$.
Define the following sets
\begin{equation}\label{equation.sets}
\begin{array}{l@{}c@{}lcl@{\hspace*{3pt}}l@{\hspace*{4pt}}c@{\hspace*{3pt}}r@{\hspace*{3pt}}r}
B&_{000}&(f)&=&\{x\in {\mathcal R}eg: \lambda_3(x)&=&\lambda_2(x)=0&=&\lambda_1(x)\}\\\noalign{\medskip}
B&_{--+}&(f)&=&\{x\in {\mathcal R}eg: \lambda_3(x)&\leq&\lambda_2(x)<0&<&\lambda_1(x)\}\\\noalign{\medskip}
B&_{-++}&(f)&=&\{x\in {\mathcal R}eg: \lambda_3(x)&<&0<\lambda_2(x)&\leq &\lambda_1(x)\}\\\noalign{\medskip}
B&_{-0+}&(f)&=&\{x\in {\mathcal R}eg:\lambda_3(x)&<&\lambda_2(x)=0&<&\lambda_1(x)\}
\end{array}
\end{equation}
The sets $B_\sigma(f)$, with $\sigma=000,--+,-++,-0+$ are invariant disjoint sets that form a partition of $M$ modulo a zero set. \par
Firstly, we show the main result, Theorem \ref{teo.jana}, under the hypothesis that generically in $\diff^{1}_{m}(M)$ the set  $B_{-0+}(f)$ has zero measure (Proposition \ref{proposition generic B} of Section \ref{section.particion}) .  
The rest of the paper consists in proving that, indeed, the hypothesis that generically in $\diff^1_m(M)$, the set $B_{-0+}(f)$ has  zero measure, is satisfied. \par
Now, generically in $\diff^{1}_{m}(M)$, if $m(B_{-0+}(f))>0$, there is a partially hyperbolic set with positive measure.  This essentially follows from Bochi-Viana's Theorem \ref{teo.bochi.viana}, see also Proposition \ref{proposition bochi viana}. The most delicate step is to show that generically, if there is such a partially hyperbolic set, then it must be the whole manifold (Corollary \ref{teo.C.r.genericity.of.ph}). 
In that case we would have that generically, if  $m(B_{-0+}(f))>0$, then $f$ is partially hyperbolic. 
But generic partially hyperbolic diffeomorphisms are ergodic, due to Hertz-Hertz-Ures' result \cite{rhrhu2008}. Using a technique by Baraviera and Bonatti \cite{baraviera_bonatti2003}, we remove the center zero Lyapunov exponent (Proposition \ref{proposition.mas.cero.menos.es.magro}), obtaining a $C^{1}$-open set where non-uniform hyperbolicity is generic. This proves that generically the set $B_{+0-}(f)$ has zero measure, and the main result follows. 
\par
So, the most delicate step, as we have mentioned, is to show Corollary \ref{teo.C.r.genericity.of.ph}, namely, that $C^{r}$-generically, if there is a partially hyperbolic set with positive measure, then this set is the whole manifold, for all $1\leq r\leq \infty$.  We sketch below the proof for $r>1$, the case $r=1$ follows easily, more details in Section \ref{section proof proposition} .  \par
Assume that there is such a partially hyperbolic set which is not the whole manifold, then we can show that this set contains an immersed surface foliated by lines tangent to the hyperbolic bundles $E_{1}(x)$ and $E_{3}(x)$ of the partially hyperbolic splitting (see Definition \ref{definition partially hyperbolic}).  We call these lines {\em strong leaves}. The surface looks more or less like the one depicted in Figure \ref{figure.boundary.leaf}, and in general is not compact. 
\begin{figure}[h]
 \includegraphics[height=5cm, width=10cm]{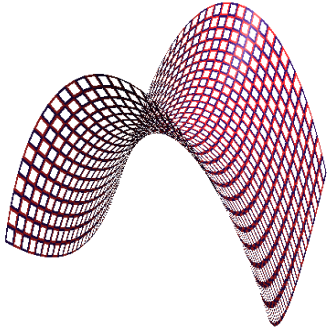}
\caption{ \label{figure.boundary.leaf} A surface foliated by strong leaves}
\end{figure}
The strong leaves meet at bounded from below angles. A Kupka-Smale type argument shows that the strong leaves of periodic points generically do not intersect. The rest, and most involved part of the proof consists in showing that periodic points accumulate on this surface with the intrinsic topology (Section  \ref{section proof of lemma final} ). This implies, due to the boundedness of the  angles, that their strong leaves do intersect, which is a non-generic situation. Therefore, generically this surface does not exist, so the partially hyperbolic set is either a zero measure set or the whole manifold.  
\subsection*{Acknowledgements.}I thank A. Avila, J. Bochi, F. Rodriguez Hertz, R. Ures and J. Yang for their enlightening conversations. I am grateful to the anonymous referee for her/his valuable comments. 
\section{Preliminaries}
Let $M$ be a closed connected Riemannian 3-manifold and let $m$ be a smooth volume measure. We consider the set $\diff^1_m(M)$ of $C^1$ diffeomorphisms preserving $m$, endowed with the $C^1$ topology.\par
As we have stated in the introduction, given a diffeomorphism $f\in\diff^1_m(M)$,  for $m$-almost every point there exists a splitting, the {\em Oseledets splitting}, $T_xM=E_1(x)\oplus\dots\oplus E_{l(x)}(x)$ and numbers $\hat\lambda_1(x)>\dots>\hat\lambda_{l(x)}(x)$, called the {\em Lyapunov exponents} such that for every non-zero vector $v\in E_i(x)$, we have
$$\lambda(x,v)=\hat\lambda_i(x),$$
where $\lambda(x,v)$, defined in Equation (\ref{equation lyap exponent}), is the exponential growth of the norm of $Tf$ along the direction of $v$.  The dimension of each $E_i(x)$ is called the {\em multiplicity of} $\hat\lambda_i(x)$. In our setting, we obtain, counting each Lyapunov exponent with its multiplicity: 
$$\lambda_1(x)\geq\lambda_2(x)\geq\lambda_3(x).$$

\begin{definition}[Dominated splitting] \label{definition dominated splitting} Given an $f$-invariant set $\Lambda$, and two invariant sub-bundles of $T_\Lambda M$: $E$ and $F$ such that $T_\Lambda M=E_\Lambda\oplus F_\Lambda$, we call the splitting an {\em $l$-dominated splitting} if for all $x\in\Lambda$ and all unit vectors $v_E\in E_x$ and $v_F\in F_x$ we have:
\begin{equation}\label{equation dominated splitting}
\|Df^l(x)v_F\|\leq \frac12\|Df^l(x)v_E\|
\end{equation}
We denote $E_\Lambda\succ_l F_\Lambda$
\end{definition}
Note that we do not require $\Lambda$ to be compact. In particular, we shall denote $E_x\succ_l F_x$ when the inequality (\ref{equation dominated splitting}) is satisfied for the orbit of $x$.
A splitting will be called {\em dominated} if there exists $l\in\N$ such that it is $l$-dominated. When a splitting is dominated, the direction of a vector not in $E$ nor in $F$ will converge to $E$ under forward iterates and to $F$ under backward iterates.
\begin{remark}\label{remark.Lyap.exponents.dom.splitting} For all $x\in {\mathcal R}eg$, if $E_x\succ F_x$,  $v_E\in E_x$ and $v_F\in F_x$, then the corresponding Lyapunov exponents satisfy $\lambda(x,v_E)>\lambda(x,v_F)$.
\end{remark}
\begin{definition}[Partial hyperbolicity]\label{definition partially hyperbolic} We say that $f$ is {\em $l$-partially hyperbolic} over an invariant set $\Lambda$ if the tangent bundle splits into three non-trivial invariant sub-bundles $E^s$, $E^c$, $E^u$, i.e. $T_\Lambda M=E^s\oplus E^c\oplus E^u$, and there exists $k\in\N$ such that for each $x\in\Lambda$
\begin{enumerate}
  \item $Df^{k}(x)$ is contracting on $E^s_x$
  \item $Df^{-k}(x)$ is contracting on $E^u_x$
  \item $E^u_\Lambda\succ_l E^c_\Lambda\succ_l E^s_\Lambda$
\end{enumerate}
$f$ is {\em partially hyperbolic} over $\Lambda$ if it is $l$-partially hyperbolic for some $l\in \N$.
\end{definition}
As in the definition of dominated splitting, $\Lambda$ need not be compact. If $f$ is partially hyperbolic over $\Lambda$, we also say that $\Lambda$ is a partially hyperbolic set. A diffeomorphism $f$ is {\em partially hyperbolic} if there is a partially hyperbolic splitting over the whole manifold $M$. Call $\mathcal{PH}^{1}_{m}(M)$ the set of partially hyperbolic diffeomorphisms in $\diff^{1}_{m}(M)$. \par
 As we mentioned in the Introduction, Ma\~n\'e and Bochi proved that there is a dichotomy for generic conservative diffeomorphisms on surfaces:
\begin{theorem}[Ma\~n\'e-Bochi \cite{manhe1983},\cite{bochi2002}] \label{teo.manhe.bochi}For a $C^{1}$-generic conservative diffeomorphism $f$ on a closed connected surface, either one of the following holds:
\begin{enumerate}
  \item all Lyapunov exponents vanish almost everywhere
  \item $f$ is Anosov
\end{enumerate}
\end{theorem}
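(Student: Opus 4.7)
The plan is to follow Bochi's strategy, built around the upper semicontinuity of the integrated top Lyapunov exponent and a perturbation lemma that mixes Oseledets directions wherever the splitting fails to be dominated.

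First I would introduce the functional $\Phi: \diff^1_m(M) \to \R$ defined by
\[
\Phi(f) = \int_M \lambda_1(x,f)\,dm(x) = \inf_{n \geq 1} \frac{1}{n}\int_M \log\|Df^n(x)\|\,dm(x),
\]
the second equality coming from subadditivity of $n \mapsto \log\|Df^n\|$. As an infimum of continuous functions, $\Phi$ is upper semicontinuous, so its continuity points form a residual set $\mathcal{C} \subset \diff^1_m(M)$. On a surface, area preservation gives $\lambda_1 + \lambda_2 = 0$ almost everywhere, so $\Phi(f) = 0$ is equivalent to the vanishing of all Lyapunov exponents a.e. The problem thus reduces to showing that every $f \in \mathcal{C}$ with $\Phi(f) > 0$ is Anosov.

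The technical heart, and the main obstacle, is the key Ma\~n\'e--Bochi lemma: at any $f \in \mathcal{C}$, the Oseledets splitting along almost every orbit in the hyperbolic set $H(f) = \{x : \lambda_1(x,f) > 0\}$ is dominated. I would prove this by contrapositive. If domination fails on a positive-measure subset of $H(f)$, a Pliss-type selection combined with the Birkhoff ergodic theorem produces long orbit segments along which the angle between the two Oseledets directions becomes arbitrarily small, and these segments occupy a positive density of time. On pairwise-disjoint flowboxes covering such segments, one constructs $C^1$-small, area-preserving perturbations that rotate the derivative in the plane containing both Oseledets directions, effectively interchanging the stable and unstable subspaces along each perturbed orbit. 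A careful bookkeeping, via the Birkhoff ergodic theorem and invariance of $m$, shows that the integrated top exponent of the perturbation drops by a definite amount, contradicting continuity of $\Phi$ at $f$. The delicate point here is that the perturbation must be simultaneously $C^1$-small and volume preserving, while the quantitative drop in $\Phi$ must be uniform in the auxiliary parameters.

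Finally, suppose $f \in \mathcal{C}$ with $\Phi(f) > 0$. The dominated splitting on $H(f)$ extends continuously to $\Lambda = \overline{H(f)}$; on a surface $\det Df \equiv 1$ forces $\|Df^n|_E\| \cdot \|Df^n|_F\| = 1$, so an $l$-dominated splitting with constant $1/2$ immediately upgrades to uniform hyperbolicity of both bundles over $\Lambda$. To conclude that $\Lambda = M$, whence $f$ is Anosov, one combines the Kupka--Smale theorem and the Ma\~n\'e ergodic closing lemma with a further application of the rotation mechanism above to rule out the coexistence of a nontrivial hyperbolic set $\Lambda \subsetneq M$ with a positive-measure region of vanishing exponents: such coexistence would allow a $C^1$-small volume-preserving perturbation producing a definite drop in $\Phi$, again contradicting continuity at $f$.
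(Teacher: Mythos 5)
This statement is cited in the paper as a known theorem (with references to Ma\~n\'e's announcement and Bochi's paper); the paper gives no proof of it, so there is no internal proof to compare your proposal against. What you have written is an accurate high-level summary of Bochi's actual proof strategy.

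Your outline captures the three essential pillars correctly: the upper semicontinuity of $\Phi(f)=\int\lambda_1\,dm$ via subadditivity and the resulting residual set of continuity points; the reduction via $\lambda_1+\lambda_2=0$ to showing that $\Phi>0$ at a continuity point forces Anosov; and, as the technical heart, the Ma\~n\'e--Bochi perturbation lemma asserting that at a continuity point of $\Phi$ the Oseledets splitting over $\{\lambda_1>0\}$ must be dominated, proved by exploiting small angles between Oseledets directions to rotate and mix them on positive-density orbit segments while keeping the perturbation $C^1$-small and area-preserving. Two remarks on the final stretch, where your sketch is thinnest. First, once you pass to $\Lambda=\overline{H(f)}$, the dichotomy ``$m(\Lambda)=1$ or a positive-measure set of vanishing exponents coexists with a nontrivial hyperbolic set'' is indeed complete, because a closed set of full Lebesgue measure has an open complement of measure zero, hence is all of $M$; it would be worth saying this explicitly. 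Second, ruling out the intermediate case $0<m(\Lambda)<1$ is genuinely delicate in the $C^1$ category --- one cannot invoke absolute continuity of the invariant holonomies as in higher regularity --- and the argument proceeds by decomposing $H(f)$ into the sets $\Gamma_p$ of $p$-dominated orbits and running the rotation perturbation near the ``boundary'' of a $\overline{\Gamma_p}$ of positive measure to again drop $\Phi$, contradicting continuity; the appeal you make to Kupka--Smale and the ergodic closing lemma is closer to Ma\~n\'e's 1983 sketch than to the mechanism Bochi actually uses. These are gaps of exposition rather than of conception; the route you describe is the correct one.
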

This dichotomy was later generalized in some weaker sense by Bochi ad Viana:
\begin{theorem}[Bochi-Viana\cite{bochi_viana2005}] \label{teo.bochi.viana} For a generic $f\in \diff^1_m(M)$, the Oseledets splitting of almost every $x$ is dominated.
\end{theorem}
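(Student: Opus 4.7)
The plan is to deduce the theorem from the interplay between an upper semi-continuous integrated Lyapunov exponent and a perturbation lemma that forces a definite drop in this exponent whenever the Oseledets splitting fails to be dominated on a positive-measure set.

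First, for each $k\in\{1,\dots,\dim M-1\}$ I would consider the function
$$LE_k(f)=\int_M\bigl(\lambda_1(x,f)+\dots+\lambda_k(x,f)\bigr)\,dm(x).$$
By Oseledets' theorem and sub-multiplicativity of $\|\wedge^k Df^n\|$, one has $LE_k(f)=\inf_{n\geq 1}\frac{1}{n}\int_M\log\|\wedge^k Df^n(x)\|\,dm(x)$. Each finite-$n$ integrand is continuous in $f$ in the $C^1$ topology, so $LE_k$ is upper semi-continuous as a pointwise infimum of continuous functions. Since $\diff^1_m(M)$ is a Baire space, each $LE_k$ is continuous on a dense $G_\delta$ set $\mathcal{R}_k$; set $\mathcal{R}=\bigcap_{k=1}^{\dim M-1}\mathcal{R}_k$, which is again residual.

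The claim to prove is that every $f\in\mathcal{R}$ has almost-everywhere dominated finest Oseledets splitting. Arguing by contradiction, suppose that for some $f\in\mathcal{R}$ there is a positive-measure invariant set $A$ and an index $k$ for which, on $A$, the splitting $E^{\leq k}\oplus E^{>k}$ into top-$k$ and bottom-$(\dim M -k)$ Oseledets bundles fails to be dominated along any finite time scale. I would then invoke the Ma\~n\'e--Bochi perturbation lemma for linear cocycles: along any sufficiently long orbit segment where the cocycle is not $l$-dominated, there are arbitrarily small volume-preserving linear perturbations of the $Df(f^j x)$ that rotate vectors from $E^{\leq k}$ into $E^{>k}$ and, once iterated, reduce $\|\wedge^k Df^n\|$ by a definite multiplicative factor depending only on the failure of dominance.

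To globalize, I would select a Kakutani--Rokhlin tower over a positive-measure subset of $A$ of height $n$ so large that, by an Oseledets/Pliss argument, on a definite proportion of orbit segments the lack of domination yields a cocycle perturbation with a uniform drop of $\frac{1}{n}\log\|\wedge^k Df^n\|$. Using a realization lemma (Arbieto--Matheus / Bochi--Viana) one then patches these linear perturbations into a genuine $C^1$-small, $m$-preserving diffeomorphism $g$, supported on small disjoint neighborhoods of the orbit segments in the tower. The outcome is
$$LE_k(g)\leq LE_k(f)-\delta$$
for some $\delta>0$ independent of the chosen $C^1$-neighborhood of $f$. But $f\in\mathcal{R}_k$ is a continuity point of $LE_k$, so $LE_k(g)\to LE_k(f)$ as $g\to f$, contradicting the uniform drop.

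The main obstacle is the perturbation machinery itself: one must show (a) that a quantitative absence of dominance forces a definite reduction of $\|\wedge^k Df^n\|$ under small rotations of the cocycle along long orbit segments, which is a delicate geometric argument about how the iterates of an ellipsoid stretch in the absence of a spectral gap, and (b) that these abstract linear perturbations can be realized as volume-preserving $C^1$ perturbations of $f$ on disjoint charts with controlled $C^1$ norm. Items (a) and (b) form the technical core of \cite{bochi2002, bochi_viana2005}; once they are in place, the remainder of the argument is the semi-continuity plus Baire category setup outlined above.
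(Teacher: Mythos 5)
The paper does not prove this statement: it is quoted verbatim as a known result from Bochi--Viana \cite{bochi_viana2005}, so there is no internal proof to compare against. Your outline is a faithful and essentially correct description of the actual Bochi--Viana argument, correctly identifying the three pillars: (i) the integrated exponents $LE_k$ are infima of continuous functions, hence upper semi-continuous, hence continuous on a residual set; (ii) at a continuity point, a positive-measure failure of domination at some index $k$ would, via the Ma\~n\'e--Bochi mixing-of-directions mechanism applied along tall Kakutani--Rokhlin towers and realized by volume-preserving $C^1$-small perturbations, force a drop in $LE_k$ bounded below independently of the perturbation size; (iii) this contradicts continuity at $f$. You also correctly flag where the real work lies (the quantitative non-domination $\Rightarrow$ exponent drop lemma and the Arbieto--Matheus/Bochi--Viana realization of linear cocycle perturbations as genuine conservative diffeomorphisms). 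One point worth making explicit: for the contradiction to close, the drop $\delta$ must not degenerate as the allowed $C^1$-size shrinks; this works because the reduction in $LE_k$ produced by a single direction exchange is governed by $\int_A(\lambda_k-\lambda_{k+1})\,dm$ --- a quantity depending only on $f$ --- even though the tower height and the waiting time $l(\eps)$ needed to exploit non-domination blow up as $\eps\to 0$.
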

More recently, Avila and Bochi found a stronger formulation, that is valid for manifolds of any dimension. Call 
$$E^{+}(x)=\bigoplus_{\lambda>0} E_{\lambda}(x)\qquad E^{-}(x)=\bigoplus_{\lambda<0} E_{\lambda}(x)$$
The {\em zipped Oseledets splitting} of $x$ is defined as $T_{x}M=E^{+}(x)\oplus E_{0}(x)\oplus E^{-}(x)$. 
\begin{theorem}[Avila-Bochi\cite{avila_bochi2009}]\label{theorem avila bochi}
For a $C^{1}$-generic conservative diffeomorphism of a closed connected manifold, either:
\begin{enumerate}
\item the Pesin region has measure zero: $m(\nuh(f))=0$, or
\item $f|\nuh(f)$ is ergodic and $\nuh(f)$, is everywhere dense, that is, it meets every open set in a positive measure set. The zipped Oseledets splitting over $\nuh(f)$ 
extends to a global splitting $TM=E^+\oplus E^-$ that is dominated.
\end{enumerate}
\end{theorem}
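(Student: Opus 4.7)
The plan is to build on Bochi–Viana's Theorem~\ref{teo.bochi.viana}, which generically gives a dominated Oseledets splitting at $m$-almost every point, and to upgrade it to a dichotomy by a Baire-category argument, combined with a Hopf-style ergodicity argument fed by Pesin theory. The whole scheme is purely perturbative in $\diff^1_m(M)$, so the inputs are (i)~a closure-extension property of dominated splittings, (ii)~perturbation lemmas in the spirit of Ma\~n\'e–Bochi that move the measure of $\nuh(f)$ in a controlled way, and (iii)~absolute continuity of Pesin laminations under a dominated splitting.

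First I would localize to $\nuh(f)$. On this set the zipped splitting is $T_xM=E^{+}(x)\oplus E^{-}(x)$, since $E_0(x)=\{0\}$ there by definition of $\nuh$. By Theorem~\ref{teo.bochi.viana}, generically this splitting is dominated on the full-measure subset of $\nuh(f)$ where the Oseledets splitting is dominated. The standard fact that a dominated splitting extends continuously, with the same constants of dominance, to the closure of any invariant set on which it is defined, then yields a dominated splitting on $\overline{\nuh(f)}$.

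The core of the argument is the dichotomy. For each open set $U$ in a countable base, one studies the functional $f\mapsto m(\nuh(f)\cap U)$. The plan is to show, via two perturbation lemmas, that generically either $m(\nuh(f)\cap U)=0$ for every such $U$ (so $m(\nuh(f))=0$), or $m(\nuh(f)\cap U)>0$ for every such $U$ (so $\nuh(f)$ is everywhere dense). The perturbative inputs are: (a)~if some open set $U$ has $m(\nuh(f)\cap U)=0$ while $m(\nuh(f))>0$, then one can $C^1$-perturb to either propagate the non-uniform hyperbolicity into $U$ or, alternatively, collapse at least one Lyapunov exponent on a definite portion of $\nuh(f)$; and (b)~appropriate semicontinuity of the integrated Oseledets data at generic $f$, so that a Baire argument converts (a) into the stated alternative. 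In the positive case, $\overline{\nuh(f)}=M$, and the dominated splitting constructed above extends to the global dominated splitting $TM=E^{+}\oplus E^{-}$ claimed in the theorem.

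For the ergodicity of $f|\nuh(f)$ in the positive alternative, I would combine Pesin theory with the globally dominated structure: the bundles $E^{+}$ and $E^{-}$ integrate, on $\nuh(f)$, to Pesin stable and unstable laminations whose holonomies are absolutely continuous, and the domination guarantees uniform control of angles along these laminations. A Hopf-type argument, using the everywhere-density of $\nuh(f)$ to chain Birkhoff generic points across local stable and unstable leaves, then shows that every $f$-invariant measurable set has either full or zero measure inside $\nuh(f)$. The hardest step is expected to be the perturbative lemma (a) above: because $\nuh(f)$ depends on $f$ in a highly discontinuous fashion, producing a genuinely $C^1$-small perturbation that shifts $m(\nuh(f)\cap U)$ in a prescribed direction requires a careful adaptation of Ma\~n\'e–Bochi's non-uniform cone-field technique to arbitrary dimension, now coupled with the dominated structure supplied by Bochi–Viana.
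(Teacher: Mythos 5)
The paper does not prove this statement; it is imported verbatim as a citation to Avila--Bochi, so there is no in-paper proof to compare against. Judging your sketch on its own merits against what is actually required, there are two genuine gaps that would have to be filled before the strategy could work.

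First, the step where you extend the dominated splitting from $\nuh(f)$ to $\overline{\nuh(f)}$ by the ``standard fact'' is not available as stated. Bochi--Viana gives, at $m$-a.e.\ $x$, an $l(x)$-dominated splitting with $l(x)$ depending measurably on the point, and the extension-to-closure lemma requires a \emph{uniform} $l$ on the invariant set being closed up. Decomposing into the sets $\nuh_l(f)$ where the domination constant is $\le l$ gives nested compact pieces, but their closures need not exhaust $\overline{\nuh(f)}$ with a common domination constant, so what you get for free is only a measurable bundle decomposition, not a dominated splitting on the closure. Obtaining a uniform $l$ over $\nuh(f)$ is one of the substantive points in Avila--Bochi (and is in fact part of what is being asserted in item (2) of the statement), so it cannot be assumed. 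Second, the ergodicity argument is underpowered: for a nonuniformly hyperbolic system, Pesin absolute continuity plus everywhere-density of $\nuh(f)$ does \emph{not} yield a working Hopf chain, because the local Pesin manifolds have no uniform lower bound on size; the chaining breaks down without Pesin blocks of positive measure that cover a definite neighborhood of the points you wish to link, or some accessibility-type mechanism. The actual route goes through a precise ergodicity criterion (in the spirit of \cite{rhrhtu2009}) that exploits the domination to control the geometry of the Pesin laminations uniformly on a large-measure block, together with the perturbative genericity of that criterion; your perturbation lemma (a) (``propagate or collapse'') names the right phenomenon but doesn't identify the actual mechanism, which in Avila--Bochi rests on semicontinuity/continuity of integrated Lyapunov functionals at generic $f$ and on realizability lemmas in the style of Bochi--Viana. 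As written, these two steps are where the proposal would fail.
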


\section{Lyapunov exponents and global dominated splitting}\label{section.particion}
From now on, let $M$ be closed connected Riemannian 3-manifold, and $f$ be a diffeomorphism in $\diff^1_m(M)$. Let $B_{000}(f)$, $B_{--+}(f)$, $B_{-++}(f)$ and $B_{-0+}(f)$ the invariant sets defined by Formula (\ref{equation.sets}), that form a partition of $M$, modulo a zero set.

\begin{proposition}\label{proposition generic B} Let $M$ be a closed connected manifold. 
For a generic diffeomorphism $f\in \diff^1_m(M)$, if $m(B_{-0+}(f))=0$, then $m(B_\sigma(f))=1$ for some $\sigma=000,--+,-++$.\newline\par
That is, for a generic diffeomorphism $f\in \diff^1_m(M)$, one and only one of the following holds:
\begin{enumerate}
  \item $m(B_{-0+}(f))>0$
  \item all Lyapunov exponents of $f$ vanish almost everywhere
  \item $f$ is non-uniformly hyperbolic, ergodic, and the Oseledets splitting extends to a globally dominated spitting on $M$.
\end{enumerate}
\end{proposition}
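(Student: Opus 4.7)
The plan is to deduce Proposition \ref{proposition generic B} from Theorem \ref{theorem avila bochi} (Avila--Bochi) together with Remark \ref{remark.Lyap.exponents.dom.splitting}, via a case split on $m(\nuh(f))$. I would work on the residual set of $\diff^1_m(M)$ where Theorem \ref{theorem avila bochi} holds, assume $m(B_{-0+}(f))=0$, and argue that one of $m(B_{000})$, $m(B_{--+})$, $m(B_{-++})$ equals $1$. The starting observation is that $\nuh(f)=B_{--+}(f)\cup B_{-++}(f)$ modulo a null set, since these are exactly the Oseledets regular points with all three exponents non-zero.

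First I would dispatch the easy case $m(\nuh(f))=0$: the identities $m(B_{--+})=m(B_{-++})=0$ together with the assumption $m(B_{-0+})=0$ force $m(B_{000})=1$, which is alternative (2). In the remaining case $m(\nuh(f))>0$, Theorem \ref{theorem avila bochi} supplies for free both the ergodicity of $f|\nuh(f)$ and the extension of the zipped Oseledets splitting to a globally dominated splitting $TM=E^+\oplus E^-$ on all of $M$. Since both $E^+_x$ and $E^-_x$ are non-trivial at any $x\in\nuh(f)$, and the ranks of the components of a dominated splitting are constant, both summands are non-trivial at every $x\in M$.

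The key step would then be to rule out $m(B_{000}(f))>0$ in this second case. Suppose $x\in B_{000}\cap{\mathcal R}eg$: applying Remark \ref{remark.Lyap.exponents.dom.splitting} to $E^+\succ E^-$ at $x$ would give $\lambda(x,v^+)>\lambda(x,v^-)$ for non-zero $v^\pm\in E^\pm_x$, contradicting that all Lyapunov exponents of $x$ vanish. Since $m({\mathcal R}eg)=1$, this forces $m(B_{000})=0$, and combined with $m(B_{-0+})=0$ we obtain $m(\nuh(f))=1$. Ergodicity of $f|\nuh(f)$ on the invariant disjoint partition $\nuh(f)=B_{--+}\cup B_{-++}$ then forces exactly one of them to have full measure, yielding alternative (3); mutual exclusiveness of the three alternatives is tautological from the definitions. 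There is no substantial obstacle here, as the hard content has been packaged into Avila--Bochi and all that remains is bookkeeping plus the sign constraint imposed by the dominated splitting. The genuinely difficult statement, to be addressed in the later sections, is that $m(B_{-0+}(f))=0$ generically in the first place.
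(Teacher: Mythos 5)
Your argument follows the same route as the paper's and the case split is correct, but there is a small gap at the very end. For alternative (3), the proposition claims that \emph{the} Oseledets splitting extends to a globally dominated splitting, and in the paper (as in the corollary that follows the main theorem) this means the \emph{finest} Oseledets splitting, not merely the zipped one $TM=E^+\oplus E^-$. Avila--Bochi only hands you the extension of the zipped splitting, which in the cases $B_{--+}$ or $B_{-++}$ with $\lambda_2\neq\lambda_3$ (resp.\ $\lambda_1\neq\lambda_2$) is strictly coarser than the finest one. The paper closes this gap by invoking Bochi--Viana (Theorem \ref{teo.bochi.viana}): for a generic $f$ the finest Oseledets splitting of $m$-a.e.\ orbit is dominated, and once you know (from ergodicity) that $m$-a.e.\ orbit is dense, the dominated splitting over a dense orbit extends to a globally dominated splitting on all of $M$. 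You should add this step after establishing $m(\nuh(f))=1$, ergodicity, and that exactly one of $B_{--+}, B_{-++}$ has full measure. Aside from this, your handling of the $B_{000}$ case via Remark \ref{remark.Lyap.exponents.dom.splitting} is slightly cleaner than the paper's (which detours through the conservativity identity $\lambda_1+\lambda_2+\lambda_3=0$, even though domination alone forces $\lambda_1>\lambda_3$), and the rest of the bookkeeping matches.
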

\begin{proof} Assume $m(B_{-0+}(f))=0$. 
If $m(B_{--+}(f)\cup B_{-++}(f))>0$, then the Pesin region $\nuh(f)=B_{--+}(f)\cup B_{-++}(f)$ is non-trivial and therefore, by Theorem \ref{theorem avila bochi} of Avila-Bochi, the zipped Oseledets splitting extends to a global dominated splitting $TM=E^+\oplus E^-$.\par
This implies that $m(B_{000}(f))=0$.  Indeed, since $f$ is conservative we always have $\lambda_1(x)\geq0\geq\lambda_3(x)$ with $\lambda_1(x)+\lambda_2(x)+\lambda_3(x)=0$. But the domination of the zipped Oseledets splitting implies that $\lambda_1(x)>\lambda_3(x)$ for $m$-a.e., as seen in Remark \ref{remark.Lyap.exponents.dom.splitting}, so $m(B_{000}(f))=0$.\par
Since, by hypothesis $m(B_{-0+}(f))=0$, and we have just proved that $m(B_{000}(f))=0$, then $f$ is non-uniformly hyperbolic on $M$, that is $m(\nuh(f))=1$. This implies by Theorem \ref{theorem avila bochi} that $f$ is ergodic. Since $B_{--+}(f)$ and $B_{-++}(f)$ are invariant sets, and their union is $M$ modulo zero, then one of them has full measure. And, since almost every orbit is dense, we have, by \cite{bochi_viana2005}, that the finest Oseledets splitting extends to a globally dominated splitting.\par
If, on the contrary, $m(B_{--+}(f)\cup B_{-++}(f))=0$, then we obviously have $m(B_{000}(f))=1$. \end{proof}
The rest of the paper consists in showing that generically we are in the hypotheses of Proposition \ref{proposition generic B}.
\section{Diffeomorphisms having a partially hyperbolic set}\label{section.difeos.having.a.ph.set}
Let us define the set $B_{str}(f)=\{x\in {\mathcal R}eg:\lambda_1(x)>\lambda_2(x)>\lambda_3(x)\}$, the invariant set of $x$ for which the inequalities between Lyapunov exponents are strict. Then $B_{+0-}(f)\subset B_{str}(f)$.
Let $$\mathcal{ A}(M)=\{f\in \diff^1_m(M):m(B_{str}(f))>0\}$$
Define the $l$-{\em partially hyperbolic set} $PH(f,l)$ of $f$ by 
\begin{equation}\label{ph(g,l)}
PH(f,l)=\overline{\{x\in {\mathcal R}eg: E_1(x)\succ_l E_2(x)\succ_l E_3(x) \}}
\end{equation}
This set is $l$-partially hyperbolic in the sense of Definition \ref{definition partially hyperbolic}.  Indeed, the $l$-domination and Remark \ref{remark.Lyap.exponents.dom.splitting} imply that for all $x\in PH(f,l)\cap {\mathcal R}eg$,  $\lambda_{1}(x)>\lambda_{2}(x)>\lambda_{3}(x)$.  Since $f$ is conservative $\lambda_{1}(x)+\lambda_{2}(x)+\lambda_{3}(x)=0$, this implies that $\lambda_{1}(x)>0$ and $\lambda_{3}(x)<0$ for a total measure set in $PH(f,l)$, that is $\mu$-almost every $x$ for each measure $\mu$ supported on $PH(f,l)$. Ma\~n\'e  proves in \cite[pages 521,522]{manhe1982} that this implies that $Df^{-k}(x)$ is contracting on $E_{1}(x)$ and $Df^{k}(x)$ is contracting on $E_{3}(x)$ for some $k\in \N$.\par 
 Conversely, all $l$-partially hyperbolic sets are contained in $PH(f,l)$ modulo a zero set.\par
 
For each $l,n\in {\mathbb N}$, we consider the sets $\PP^{r}_{l,n}(M)$ of $C^{r}$-diffeomorphims having an $l$-partially hyperbolic set with measure greater or equal than $1/n$, that is:
\begin{equation}
\PP^{r}_{l,n}(M)=\left\{f\in \diff^r_m(M): m(PH(f,l))\geq \frac{1}{n}\right\}
\end{equation}
Then we have:
\begin{proposition}\label{proposition bochi viana}
The following set is meager in $\diff^1_m(M)$:
\begin{equation}\label{equation conjunto M}
\ph(M)\setminus\bigcup_{l,n\geq 1}\PP^{1}_{l,n}(M)
\end{equation}
That is, for a generic diffeomorphism in $\diff^{1}_{m}(M)$, if $f$ is in ${\mathcal A}(M)$ then $f$ has an $l$-partially hyperbolic set with positive measure, for some $l\in\N$.
\end{proposition}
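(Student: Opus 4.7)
The plan is to combine Bochi--Viana's Theorem \ref{teo.bochi.viana} with a countable additivity argument in the domination constant. Let $\mathcal{R} \subset \diff^1_m(M)$ denote the residual set furnished by Theorem \ref{teo.bochi.viana}, so that for every $f \in \mathcal{R}$ the Oseledets splitting over the orbit of $m$-a.e.\ $x$ is dominated. The proposition reduces to establishing the inclusion
\[
\mathcal{R} \cap \ph(M) \;\subset\; \bigcup_{l,n\geq 1} \PP^1_{l,n}(M),
\]
since then the set (\ref{equation conjunto M}) is contained in the meager set $\diff^1_m(M) \setminus \mathcal{R}$.

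Fix $f \in \mathcal{R} \cap \ph(M)$, so by definition $m(B_{str}(f)) > 0$. At each $x \in B_{str}(f)$ the finest Oseledets splitting consists of three one-dimensional bundles $E_1(x) \oplus E_2(x) \oplus E_3(x)$ with strictly ordered exponents. For $m$-a.e.\ such $x$, membership in $\mathcal{R}$ provides integers $l_1(x), l_2(x) \geq 1$ with $E_1(x) \succ_{l_1(x)} E_2(x)$ and $E_2(x) \succ_{l_2(x)} E_3(x)$. Since $l$-domination over an orbit implies $kl$-domination for every $k \geq 1$---a direct iteration of the defining inequality along the orbit shows that the ratio $\|Df^{kl}v_F\|/\|Df^{kl}v_E\|$ is bounded by $2^{-k}$---the common choice $l(x) := l_1(x)\, l_2(x)$ yields $E_1(x) \succ_{l(x)} E_2(x) \succ_{l(x)} E_3(x)$.

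Setting $A_l := \{x \in \mathcal{R}eg \cap B_{str}(f) : E_1(x) \succ_l E_2(x) \succ_l E_3(x)\}$ for each $l \geq 1$, the previous paragraph gives $m\bigl(\bigcup_{l\geq 1} A_l\bigr) = m(B_{str}(f)) > 0$, so by countable additivity there exist $l, n \geq 1$ with $m(A_l) \geq 1/n$. Since $A_l \subset PH(f,l)$ directly from the definition (\ref{ph(g,l)}), this yields $m(PH(f,l)) \geq 1/n$, i.e.\ $f \in \PP^1_{l,n}(M)$, completing the inclusion. The substantive content of the argument lives entirely in Theorem \ref{teo.bochi.viana}; the present proof only converts its pointwise almost-everywhere conclusion into a uniform quantitative one via countable additivity, so no genuine obstacle arises.
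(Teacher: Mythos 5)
Your proof is correct and follows essentially the same route as the paper: invoke the Bochi--Viana residual set $\mathcal{R}$, observe that $\ph(M)\setminus\bigcup_{l,n}\PP^1_{l,n}(M) \subset \diff^1_m(M)\setminus\mathcal{R}$ via a countable-additivity argument on $B_{str}(f)=\bigcup_l A_l \pmod 0$, and conclude. The only difference is that you explicitly spell out why a single $l(x)$ can be chosen to dominate both pairs $E_1\succ E_2$ and $E_2\succ E_3$ (taking $l=l_1l_2$), a small point the paper leaves implicit when it says the Oseledets splitting is ``$l$-dominated for some $l(x)$.''
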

\begin{proof}
Call ${\mathcal R}$ the residual set of diffeomorphisms of $\diff^{1}_{m}(M)$ obtained in Theorem \ref{teo.bochi.viana} for which the Oseledets splitting of almost every orbit is dominated. 
 For $f\in\mathcal{ R}\cap\ph(M)$ and $m$-a.e. $x\in B_{str}(f)$, the Oseledets splitting of $x$, which is of the form $T_x M=E^1_x\oplus E^2_x\oplus E^3_x$, is $l$-dominated for some $l(x)\geq 1$.\par
In other words, for $f\in \mathcal{ R}\cap \ph(M)$, the set $B_{str}(f)$ coincides modulo a zero set with the set $\bigcup_{l=1}^\infty PH(f,l)$. By definition, if $f\in\mathcal{R}\cap\ph(M)$, we have $m(B_{str}(f)>0$, but being the union of sets $PH(f,l)$ countable, at least one of them will satisfy $m(PH(f,l))>0$, therefore $m(PH(f,l))\geq\frac{1}{n}$ for some $n$. From this we get that $\mathcal{ R}\cap \ph(M)$ is contained in $\bigcup_{l,n}\PP^{1}_{l,n}(M)$, and the claim follows.
\end{proof}

\begin{proposition}\label{lema.pln.closed}
For each $l,n\in \N$, and $r\geq 1$ the set $\PP^{r}_{l,n}(M)$ is $C^r$-closed.
\end{proposition}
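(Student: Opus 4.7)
The plan is to verify sequential closedness of $\PP^{r}_{l,n}(M)$ directly: given $f_j \to f$ in $\diff^{r}_{m}(M)$ with each $f_j \in \PP^{r}_{l,n}(M)$, I will build a compact $f$-invariant set of $m$-measure at least $1/n$ that carries an $l$-dominated splitting with hyperbolic extremes, and then deduce that this set lies in $PH(f,l)$ modulo a null set, which will give $f\in \PP^{r}_{l,n}(M)$.

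First, I would extract the candidate set. For each $j$, let $K_j := PH(f_j, l)$. As recorded in the paragraph preceding this statement, $K_j$ is a compact $f_j$-invariant set with $m(K_j) \geq 1/n$, carrying a continuous $l$-dominated splitting $TM|_{K_j} = E^s_j \oplus E^c_j \oplus E^u_j$ into three one-dimensional sub-bundles with $E^s_j, E^u_j$ uniformly hyperbolic. By Blaschke's selection principle I pass to a subsequence so that $K_j$ converges in the Hausdorff metric to some compact $K \subset M$. Uniform convergence $f_j \to f$ then forces $f(K) = K$. For the measure bound, for any open $U \supset K$ one has $K_j \subset U$ for all large $j$, whence $m(U) \geq m(K_j) \geq 1/n$; outer regularity of $m$ gives $m(K) \geq 1/n$.

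Next, I would construct a continuous $l$-dominated splitting on $K$ for $f$. Given $x \in K$, pick $x_j \in K_j$ with $x_j \to x$; since the Grassmannian bundle of $TM$ is compact, a further subsequence gives $E^\sigma_j(x_j) \to E^\sigma(x)$ for $\sigma \in \{s,c,u\}$. The classical Hölder continuity of $l$-dominated splittings, with modulus depending only on $l$ and $C^1$-bounds that hold uniformly on a neighborhood of $f$, shows that $E^\sigma(x)$ does not depend on the choice of approximating sequence $x_j$; hence $E^\sigma$ is a well-defined continuous section over $K$. Passing to the limit in the closed inequality $\|Df_j^{l}(x_j)v_F\| \leq \tfrac12\|Df_j^{l}(x_j)v_E\|$ (using $Df_j \to Df$ uniformly) yields $E^u \succ_l E^c \succ_l E^s$ over $K$ for $f$, with $Df$-invariance inherited from $Df_j$-invariance.

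Finally, I would close the argument by repeating verbatim the reasoning already used in this section: since each summand is one-dimensional and $f$ is conservative, for any invariant measure $\mu$ supported on $K$ the Lyapunov exponents satisfy $\lambda^u > \lambda^c > \lambda^s$ by Remark \ref{remark.Lyap.exponents.dom.splitting} and $\lambda^u + \lambda^c + \lambda^s = 0$, forcing $\lambda^u > 0 > \lambda^s$; Ma\~n\'e's argument \cite{manhe1982} then upgrades $E^s$ and $E^u$ to uniformly hyperbolic bundles, so $K$ is $l$-partially hyperbolic for $f$ in the sense of Definition \ref{definition partially hyperbolic}. At $m$-a.e.\ $x \in K$ the Oseledets splitting exists and, by one-dimensionality of each summand, must coincide with $E^s(x)\oplus E^c(x)\oplus E^u(x)$, placing $x$ in the pre-closure set that defines $PH(f,l)$. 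Thus $m(PH(f,l)) \geq m(K) \geq 1/n$ and $f \in \PP^{r}_{l,n}(M)$. The delicate step is the uniqueness of the limiting bundles in the second part, which rests on the uniform Hölder regularity of dominated splittings across a $C^{1}$-neighborhood of $f$; the rest is a soft compactness argument.
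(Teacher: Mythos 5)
Your proof follows essentially the same strategy as the paper's: pass to a Hausdorff limit $K$ of the compact sets $PH(f_j,l)$, argue that $K$ carries an $l$-dominated splitting for $f$ by taking limits of the splittings over $PH(f_j,l)$, and conclude via upper semi-continuity of $m$ under Hausdorff convergence. The paper states the two key facts (continuity of dominated splittings with respect to $f$, and upper semi-continuity of $m$) without elaboration, and does not need the detour through Ma\~n\'e's hyperbolicity argument that you include; your identification of the ``delicate step'' --- well-definedness of the limiting bundles --- is correct, and this is precisely the content of the paper's appeal to continuity.

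However, the justification you offer for that delicate step is not on solid ground for the full range $r\geq 1$. You invoke H\"older continuity of dominated splittings with modulus controlled by uniform $C^1$-bounds, but H\"older regularity of the sub-bundles of a dominated splitting is inherited from H\"older regularity of $Df$, i.e.\ it is a $C^{1+\alpha}$ phenomenon. For a $C^1$ diffeomorphism the bundles are continuous but carry no quantitative modulus, and there is no uniform H\"older estimate along a $C^1$-convergent sequence $f_j\to f$; so for $r=1$ this step, as you phrase it, fails. The standard and cleaner route is uniqueness rather than regularity: take, inside the flag bundle over $M$, the Hausdorff limit of the graphs of the splittings over $PH(f_j,l)$. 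This limit is a compact set, invariant under the map induced by $Df$, satisfying the closed $l$-domination inequalities over $K$, and projecting onto $K$. Since an $l$-dominated splitting over an invariant set with prescribed summand dimensions is unique, this limit set is automatically the graph of a (necessarily continuous) splitting. This replaces the H\"older input with a soft uniqueness argument and closes the gap for all $r\geq 1$.
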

\begin{proof}
Let $f_k$ be a sequence in $\PP^{r}_{l,n}(M)$ such that $f_k\to f$ in $\diff^r_{m}(M)$. Then, by definition, we have  $m(PH(f_k,l))\geq \frac{1}{n}$ for all $k\in \N$.\par
The sets $PH(f_k,l)$ are all compact. Let us consider $K$ the Hausdorff limit of any converging subsequence of $PH(f_{k}, l)$'s in the Hausdorff topology. Then $K$ is contained in $PH(f,l)$, since the $l$-dominated splittings vary continuously with respect to $f$ in the $C^r$-topology. \par
 On the other hand, $m$ is upper semi-continuous with respect to Hausdorff limits, that is, if $A_n$ converge to $A$ in the Hausdorff topology, then $\limsup_n m(A_n)\leq m(A)$. This implies that $m(PH(f,l))\geq \limsup_k m(PH(f_k,l))\geq \frac{1}n$, and proves the claim.
\end{proof}
As a consequence, we have that $\PP^{r}_{l,n}(M)\setminus{\rm int}(\PP^{r}_{l,n}(M))$ is a closed set with empty interior, and hence we obtain: 
\begin{corollary}\label{corollary.closure.interior}
 For each $r\in[1,\infty]$ and $l,n\in\N$, the following set is meager in $\diff^{r}_{m}(M)$
 $$\PP^{r}_{l,n}(M)\setminus \overline {{\rm int}(\PP^{r}_{l,n}(M))}$$
\end{corollary}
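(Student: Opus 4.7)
The plan is to derive this corollary as a direct Baire-category consequence of Proposition \ref{lema.pln.closed}. The space $\diff^r_m(M)$ with the $C^r$ topology is a Baire space for every $r\in[1,\infty]$ (it is completely metrizable for finite $r$ and a Fr\'echet space for $r=\infty$), so it will suffice to show that the displayed set is nowhere dense.

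By Proposition \ref{lema.pln.closed}, $\PP^r_{l,n}(M)$ is closed in $\diff^r_m(M)$. The step I would carry out next is to invoke the following elementary topological fact: for any closed subset $A$ of a topological space, the topological boundary $A\setminus\interior(A)$ is closed with empty interior. Indeed, if a nonempty open set $U$ were contained in $A\setminus\interior(A)$, then $U\subset A$ by closedness, which would force $U\subset\interior(A)$, contradicting the fact that $U$ is disjoint from $\interior(A)$. Hence $A\setminus\interior(A)$ is nowhere dense.

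Applied to $A=\PP^r_{l,n}(M)$, this immediately shows that $\PP^r_{l,n}(M)\setminus\interior(\PP^r_{l,n}(M))$ is nowhere dense in $\diff^r_m(M)$, hence meager. Since the inclusion
$$\PP^r_{l,n}(M)\setminus\overline{\interior(\PP^r_{l,n}(M))}\;\subset\;\PP^r_{l,n}(M)\setminus\interior(\PP^r_{l,n}(M))$$
is trivial, the set appearing in the statement is contained in a meager set and is therefore itself meager. There is no substantive obstacle: the corollary is a purely soft consequence of the closedness of $\PP^r_{l,n}(M)$ already recorded in Proposition \ref{lema.pln.closed}, combined with the Baire property of $\diff^r_m(M)$.
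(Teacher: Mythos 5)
Your argument is correct and coincides with the paper's: Proposition \ref{lema.pln.closed} shows $\PP^r_{l,n}(M)$ is closed, its topological boundary is therefore closed with empty interior (hence nowhere dense), and the set in question is contained in that boundary. The only superfluous remark is the appeal to the Baire property of $\diff^r_m(M)$ --- nowhere dense sets are meager by definition, so no Baire-category input is actually used.
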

As a consequence of Corollary \ref{teo.C.r.genericity.of.ph} to be proved in Sections \ref{section proof proposition} and \ref{section proof of lemma final}, we have 
\begin{equation}
\mathcal{PH}^{1}_{m}(M)=\bigcup_{l,n}\overline{{\rm int}(\PP^{1}_{l,n}(M))}
\end{equation}
This, together with Corollary \ref{corollary.closure.interior} and Proposition \ref{proposition bochi viana} implies that the set 
$$\ph(M)\setminus {\mathcal PH}^{1}_{m}(M)\qquad \text{is meager}$$

Let us now define the set
$$\ph_0(M)=\{f\in \ph(M): m(B_{-0+}(f))>0\}.$$

\begin{proposition}\label{proposition.mas.cero.menos.es.magro}
$\ph_0(M)$ is meager in $\diff^1_m(M)$. That is, generically in $\diff^1_m(M)$, the set $B_{-0+}(f)$ has measure zero.
\end{proposition}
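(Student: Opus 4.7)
The plan is to combine four residual conditions in $\diff^1_m(M)$ so that on their intersection the assumption $m(B_{-0+}(f))>0$ would force the center Lyapunov exponent to vanish and to be nonzero simultaneously. Since $B_{-0+}(f)\subset B_{str}(f)$, every $f\in\ph_0(M)$ lies in $\ph(M)$, and I will narrow down the surviving class of $f\in\ph_0(M)$ step by step.

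First I would use Proposition \ref{proposition bochi viana} to obtain a residual set $\mathcal{R}_1\subset\diff^1_m(M)$ on which every $f\in\ph(M)$ admits an $l$-partially hyperbolic set of positive measure, and then Corollary \ref{teo.C.r.genericity.of.ph} to obtain a further residual set $\mathcal{R}_2$ on which the existence of such a set (with one-dimensional center) forces $f$ to be partially hyperbolic on all of $M$. The one-dimensional center condition is automatic here: on $B_{-0+}(f)$ the three Oseledets exponents are distinct, so the center direction of the $l$-dominated splitting coincides almost everywhere on this set with the middle Oseledets line. Thus on $\ph_0(M)\cap\mathcal{R}_1\cap\mathcal{R}_2$ we have $f\in\mathcal{PH}^1_m(M)$ with $\dim E^c=1$.

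Next I would invoke the Hertz-Hertz-Ures theorem \cite{rhrhu2008}, which yields a residual subset $\mathcal{R}_3$ of $\mathcal{PH}^1_m(M)$ consisting of ergodic diffeomorphisms. For $f\in\mathcal{R}_3$ the Lyapunov exponents are $m$-a.e.\ constant, so $m(B_{-0+}(f))>0$ forces the center exponent
\[
\lambda^c(f)=\int_M \log\|Df(x)|_{E^c(x)}\|\,dm(x)
\]
to equal zero. I would then apply the Baraviera-Bonatti scheme \cite{baraviera_bonatti2003}: for any conservative $f\in\mathcal{PH}^1_m(M)$ with $\lambda^c(f)=0$ one produces arbitrarily $C^1$-small conservative perturbations with $\lambda^c\ne 0$. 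Combined with the $C^1$-continuity of $\lambda^c$ on $\mathcal{PH}^1_m(M)$ (the bundle $E^c$ depends $C^0$-continuously on $f$ and $m$ is fixed), this makes $\{\lambda^c\ne 0\}$ open and dense in $\mathcal{PH}^1_m(M)$, yielding a residual subset $\mathcal{R}_4\subset\mathcal{PH}^1_m(M)$.

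On $\mathcal{R}_1\cap\mathcal{R}_2\cap\mathcal{R}_3\cap\mathcal{R}_4$ the hypothesis $m(B_{-0+}(f))>0$ gives the contradictory pair $\lambda^c(f)=0$ and $\lambda^c(f)\ne 0$, so no such $f$ survives. Openness of $\mathcal{PH}^1_m(M)$ ensures that a subset of $\mathcal{PH}^1_m(M)$ is meager in $\diff^1_m(M)$ if and only if it is meager relative to $\mathcal{PH}^1_m(M)$, so $\ph_0(M)$ is meager in $\diff^1_m(M)$, as claimed. The main obstacle is the Baraviera-Bonatti step: producing a $C^1$ conservative perturbation that remains inside $\mathcal{PH}^1_m(M)$ and genuinely perturbs the integrated center exponent away from zero requires their delicate local rotation construction along recurrent orbit segments. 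The rest of the argument is essentially bookkeeping of residual sets, together with the identification of $E^c$ with the middle Oseledets line on $B_{-0+}(f)$.
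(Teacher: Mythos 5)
Your proposal is correct and follows essentially the same route as the paper: reduce to the partially hyperbolic case via Proposition~\ref{proposition bochi viana} and Corollary~\ref{teo.C.r.genericity.of.ph}, invoke generic ergodicity of partially hyperbolic systems (Hertz-Hertz-Ures, combined with Avila's regularization and the Oxtoby-Ulam $G_\delta$ property of ergodicity), and then apply Baraviera-Bonatti to make the integrated center exponent generically nonzero. The only point you gloss over is that HHU alone gives a $C^1$-open, $C^2$-dense set in $\mathcal{PH}^2_m(M)$, and one needs the Avila/Oxtoby-Ulam chain to upgrade this to a residual subset of $\mathcal{PH}^1_m(M)$, but this is precisely how the paper argues.
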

\begin{proof}
From the discussion above we have that
$$\ph_0(M)\setminus\mathcal{ PH}^1_m(M)$$
is meager in $\diff^{1}_{m}(M)$, let us see that $\ph_{0}(M)\cap \mathcal{PH}^{1}_{m}(M)$ is meager too. \par
To see that the generic diffeomorphism in $\mathcal{PH}^{1}_{m}(M)$ is ergodic, recall that ergodicity is a $G_{\delta}$-property in $\diff^{r}_{m}(M)$ for all $r\geq 0$, due to a result by Oxtoby-Ulam in the 40's \cite{oxtoby_ulam}. A recent result by Avila  \cite{avila2008} establishes that $\diff^{2}_{m}(M)$ is dense in $\diff^{1}_{m}(M)$. Hertz-Hertz-Ures \cite{rhrhu2008} prove that $\mathcal{PH}^{2}_{m}(M)$ contains a $C^{1}$-open and $C^{2}$-dense set of ergodic diffeomorphisms. Then, ergodicity is $C^{1}$-generic among partially hyperbolic diffeomorphisms.\par
In other words, if we call $\mathcal{EPH}^1_m(M)$ the set of ergodic partially hyperbolic diffeomorphisms in $\diff^1_m(M)$,  the following set is meager
\begin{equation}\label{formula.ph}
  \ph_0(M)\setminus\mathcal{EPH}^1_m(M), 
\end{equation}
Due to Baraviera and Bonatti \cite{baraviera_bonatti2003}, there is a $C^1$-open and dense set of partially hyperbolic $f$ for which $\int\lambda_2(x,f)dm\ne0$. So, $\mathcal{EPH}^1_m(M)\cap\ph_0(M)$ is meager, and hence $\ph_0(M)$ is too.
\end{proof}

\section{Generic non-existence of proper partially hyperbolic subsets} \label{section proof proposition}

In this section we deduce Corollary \ref{teo.C.r.genericity.of.ph}, from Theorem \ref{theorem.dificil}
Assume first $r>1$. Let $K$ be a partially hyperbolic set such that $0<m(K)<1$. For each $x\in K$ there are immersed manifolds $W^{uu}(x)$ tangent to $E_{u}(x)$ and $W^{ss}(x)$ tangent to $E_{s}(x)$, see for instance \cite{HPS}. They are called, respectively, the {\em strong unstable} and {\em strong stable} manifolds. They are uniquely defined by the fact of being tangent to $E_{u}(x)$, respectively $E_{s}(x)$ at every point. In our case, they are immersed lines. \par
 From Theorem \ref{theorem.dificil} it follows there are two periodic points $p$  and $q$ in $K$ that are {\em strongly heteroclinically related}, that is 
$$W^{ss}(p)\cap W^{uu}(q)\ne \emptyset\qquad\text{and}\qquad W^{uu}(p)\cap W^{ss}(q)\ne \emptyset$$
where the intersection is {\em quasi-transverse}, that is $T_{x}W^{ss}(p)\cap T_{x} W^{uu}(q)=\{0\}$ for any $x\in W^{ss}(p)\cap W^{uu}(q)$.\par 
Two immersed manifolds $W$ and $V$ are {\em transverse} if either they do not intersect, or else they satisfy $$T_{x}W\oplus T_{x}V=T_{x}M$$
at every point $x\in V\cap W$. We denote $V\transv W$ if $V$ and $W$ are transverse immersed manifolds.
\begin{proposition}\label{proposition kupka-smale} For all $r\in[1,\infty]$, there exist a residual set of diffeomorphisms $\mathcal R$ such that for $f\in \mathcal R$, all $f$-periodic points $p, q$ are hyperbolic, and satisfy
$$W^{uu}(p)\pitchfork W^{ss}(q)\qquad\forall p\ne q$$
where $W^{uu}(p)$ and $W^{ss}(q)$ are the strong unstable of $p$ and the strong stable manifold of $q$, respectively.
\end{proposition}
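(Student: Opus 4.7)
The plan is to adapt the classical Kupka-Smale scheme to the conservative $C^{r}$ setting, controlling strong stable and strong unstable leaves of periodic points via a countable intersection of open dense conditions. For each $N,R\in\N$, let $\mathcal{R}_{N,R}\subset\diff^{r}_{m}(M)$ be the set of $f$ such that every periodic point of period at most $N$ is hyperbolic, and such that for any two distinct periodic points $p\ne q$ of period at most $N$ which admit a strong unstable and a strong stable manifold (that is, whose derivatives of the return map have three distinct eigenvalue moduli), the intrinsic $R$-disks $W^{uu}_{R}(p)$ and $W^{ss}_{R}(q)$ are transverse, and similarly with $p$ and $q$ exchanged. Since in a $3$-manifold the dimensions of $W^{uu}_{R}(p)$ and $W^{ss}_{R}(q)$ add up to $2<3$, transversality amounts to disjointness of two compact arcs. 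The desired residual set is then $\mathcal{R}=\bigcap_{N,R\in\N}\mathcal{R}_{N,R}$.

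Openness of $\mathcal{R}_{N,R}$ is routine. The persistence of hyperbolic periodic orbits is classical, and once the three eigenvalue moduli of a hyperbolic $p$ are distinct, the fast invariant directions $E^{uu}_{p}$ and $E^{ss}_{p}$ depend continuously on $f$; the corresponding local leaves propagated up to intrinsic length $R$ vary continuously in the $C^{r}$-topology by standard hyperbolic graph transform arguments. Disjointness of two compact $1$-manifolds inside a compact $3$-manifold is an open condition. The density of the first condition, that all periodic points of period at most $N$ are hyperbolic, is the conservative Kupka-Smale theorem, due to Robinson for $r=1$ and to Zehnder in higher regularity; one may assume it already holds and work relative to it.

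Density of the transversality condition is the technical core. Suppose $f$ has all periodic points of period at most $N$ hyperbolic, but there is a pair $p\ne q$ and a point $z\in W^{uu}_{R}(p)\cap W^{ss}_{R}(q)$. Pick a deep iterate $z_{k}=f^{k}(z)$ with $|k|$ large enough that $z_{k}$ sits at positive distance from every periodic orbit of period at most $N$, from $p$ and $q$, and from every other arc $W^{uu}_{R}(p')\cup W^{ss}_{R}(q')$ of the finitely many relevant periodic pairs. In a small ball $B$ around $z_{k}$ on which both $W^{uu}(p)$ and $W^{ss}(q)$ cross as embedded arcs, I would perturb $f$ by postcomposing with the time-$\eps$ map of a divergence-free vector field compactly supported in $B$ realizing an arbitrary $C^{r}$-small infinitesimal translation of $B$. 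Because $\dim W^{uu}(p)+\dim W^{ss}(q)=2<3$, a generic such translation pushes the two arcs off each other inside $B$; propagating by the perturbed dynamics destroys the intersection at $z$ globally. Iterating over the finitely many pairs and the finitely many intersection points in each $W^{uu}_{R}(p)\cap W^{ss}_{R}(q)$, whose finiteness follows from compactness and from the one-dimensionality of the arcs after perturbation, yields density.

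The main obstacle is ensuring that this local conservative perturbation destroys only the targeted intersection without creating new ones elsewhere. This is controlled by the freedom in choosing $k$: taking $|k|$ sufficiently large places $B$ outside every compact piece $W^{uu}_{R}(p')\cup W^{ss}_{R}(q')$ associated with the other finitely many periodic pairs, so the previously achieved transversalities are preserved. A standard Baire category argument then gives $\mathcal{R}=\bigcap_{N,R\in\N}\mathcal{R}_{N,R}$ as the desired residual set.
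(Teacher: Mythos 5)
The paper itself offers no proof here---it only asserts that the statement ``can be straightforwardly adapted from the well known Kupka-Smale Theorem's proof''---so your proposal is filling a gap the author left to the reader, and it does so in exactly the intended way: a countable family of open-dense conditions $\mathcal{R}_{N,R}$ on periodic orbits of bounded period and bounded-length strong leaves, established dense by local conservative perturbations supported away from periodic orbits and previously achieved transversalities, then intersected via Baire. This is the standard Kupka-Smale scheme adapted to the volume-preserving setting and to strong (rather than full) invariant manifolds, and it is what the paper is pointing at. Note only that the paper works on an $n$-manifold with one-dimensional center, so the relevant dimension count is $\dim W^{uu} + \dim W^{ss} = n-1 < n$, not specifically $1+1 < 3$; your argument goes through verbatim with this replacement.

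Two steps should be tightened. First, you justify the finiteness of $W^{uu}_R(p)\cap W^{ss}_R(q)$ as holding ``after perturbation,'' but you also propose to perturb \emph{at} each intersection point, which is circular. The classical resolution is to perturb in a fundamental domain: fix a compact intrinsic annulus $A\subset W^{uu}(p)$ between $W^{uu}_\rho(p)$ and $W^{uu}_{\rho'}(p)$, choose the annulus (by iterating) to lie far from periodic orbits and from the other finitely many compact leaves in play, and use a divergence-free, compactly supported field to push $A$ off $W^{ss}_R(q)$; iterating forward propagates the disjointness along all of $W^{uu}_R(p)$ without any prior finiteness input. Second, the spectral condition ``three distinct eigenvalue moduli'' is not itself open, so $\mathcal{R}_{N,R}$ as written may fail to be open: a $C^{r}$-small perturbation can split a repeated eigenvalue modulus and create new strong leaves, and the direction of those leaves is uncontrolled near the degenerate locus. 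In the paper's actual application this is harmless, since the periodic orbits in question lie in a partially hyperbolic set $K$ and therefore have a uniform spectral gap; but for your $\mathcal{R}_{N,R}$ to be genuinely open you should quantify the gap, for instance by requiring the three moduli to differ pairwise by at least $1/R$, so that the condition becomes vacuous (and thus trivially satisfied) near the degenerate boundary. With these two repairs the argument is complete and is the adaptation the paper has in mind.
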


The proof of Proposition \ref{proposition kupka-smale} can be straightforwardly adapted from the well known Kupka-Smale Theorem's proof. \par

Theorem \ref{theorem.dificil} and Proposition \ref{proposition kupka-smale}  imply that $C^{r}$-generically with $r>1$, there is no partially hyperbolic subset $K$ with $0<m(K)<1$. For if there were, there would be two periodic points, the strong stable and unstable manifolfds of which have non-trivial intersection. This intersection cannot be transverse, since the sum of the dimensions of the manifolds involved is $(n-1)$. \par
The case $r=1$ follows from the case $r>1$. Consider the $C^{1}$-interior of the set of diffeomorphisms $\PP^{1}_{l,n}(M)$. Due to Avila's regularization result \cite{avila2008}, the $C^{r}$-interior of $\PP^{r}_{l,n}(M)$ is $C^{1}$-dense in the interior of $\PP^{1}_{l,n}(M)$, for $r>1$. 
Since it was already established the validity of Corollary \ref{teo.C.r.genericity.of.ph} for $r>1$, we have that there is a $C^{1}$-dense set of $l$-partially hyperbolic diffeomorphisms in the interior of $\PP^{1}_{l,n}(M)$.\par
The set of $l$-partially hyperbolic diffeomorphisms is $C^{1}$-closed, so all $f$ in $\overline{\text{int}\PP^{1}_{l,n}(M)}$ are $l$-partially hyperbolic .  As we have seen in the previous section, the set of diffeomorphisms $f$ for which $m(PH(f,l))>0$ differs from the union over $n$ of ${\rm int}(\PP_{l,n}(M))$ in a meager set, so we get Corollary \ref{teo.C.r.genericity.of.ph} for $r=1$. 

\section{Proof of Theorem \ref{theorem.dificil}}\label{section proof of lemma final}
Let $f$ be a diffeomorphism in $\diff^{r}_{m}(M)$, with $r>1$. $M$ is a closed connected $n$-manifold.  
Let $K$ be a compact invariant $l$-partially hyperbolic set such that $0<m(K)<1$ and such that $\dim E^{c}_{x}=1$ for all $x\in K$. \par
The {\em essential closure} of $K$ is 
\begin{equation}\label{equation.ess.closure.PH}
\text{ess}(K)=\{x\in K: m(B_\eps(x)\cap K)>0\quad \forall \eps>0\}.
\end{equation}

$\text{ess}(K)$ is a closed subset of $K$ that contains all Lebesgue density points of $K$; therefore $m(\text{ess}(K))=m(K)$.  \par
A proof of the following lemma can be also found in \cite[Corollary B]{zhang}
\begin{lemma}\label{lemma.K(f,l).su.saturado}
If $\text{ess}(K)=K$ and $m(K)>0$, then for each $x\in K$
 $$W^{ss}(x)\cup W^{uu}(x)\subset K$$
\end{lemma}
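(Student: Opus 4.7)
The plan is to prove $W^{ss}(x)\subset K$ for every $x\in K$; the statement about $W^{uu}(x)$ follows by applying the same argument to $f^{-1}$, which is also $C^{r}$-conservative and preserves $K$ (with the strong stable and strong unstable bundles exchanged). Fix $x\in K$ and $y\in W^{ss}(x)$. Since $d(f^{n}(x),f^{n}(y))\to 0$ exponentially and $K$ is $f$-invariant, after replacing $(x,y)$ by $(f^{n}(x),f^{n}(y))$ for large $n$ one may assume that $y$ lies in the local strong stable manifold of $x$ inside a single small foliation box. Because $K$ is closed and equal to its own essential closure, it will suffice to prove that every open neighborhood $V$ of $y$ satisfies $m(V\cap K)>0$: then $y\in\overline{K}=K$.

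The first key ingredient is a Hopf-type argument which uses no genericity. Since $K$ is closed, $\chi_{K}$ is upper semicontinuous and is the pointwise monotone limit of a decreasing sequence of continuous functions $\phi_{k}\downarrow\chi_{K}$. Birkhoff's theorem produces forward averages $\phi_{k}^{+}$ on a full $m$-measure set, and because forward orbits along a common strong stable leaf contract to each other, the uniform continuity of $\phi_{k}$ forces $\phi_{k}^{+}$ to agree at a.e.\ pair of regular points on the same leaf. Passing to the monotone limit and using the invariance of $K$ to identify $\chi_{K}^{+}=\chi_{K}$ a.e., one concludes that on an $m$-full-measure family of strong stable leaves $L$ the function $\chi_{K}|_{L}$ is $1$-dimensional Lebesgue a.e.\ constant on $L$. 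In particular, any such ``good'' leaf that meets $K$ in positive $1$-dimensional Lebesgue measure is contained in $K$ modulo a $1$-dimensional null set.

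The second key ingredient is absolute continuity of the strong stable foliation, valid in the $C^{1+\alpha}$ setting $r>1$. Since $x\in\mathrm{ess}(K)$, every small foliation box $U_{x}$ around $x$ satisfies $m(U_{x}\cap K)>0$; Fubini with the absolutely continuous transverse disintegration then produces a positive-measure set $A_{x}$ of transverse footpoints whose local strong stable leaves meet $K$ in positive $1$-dimensional length, and by the previous paragraph (after intersecting $A_{x}$ with the good family of leaves, still of positive transverse measure) those leaves lie in $K$ modulo null sets. Transporting $A_{x}$ to a transversal at $y$ through the absolutely continuous strong stable holonomy yields a positive-measure set of leaves through a neighborhood of $y$ which are contained in $K$ modulo null sets. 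A final Fubini in a foliation box around $y$ then gives $m(V\cap K)>0$ for every open neighborhood $V$ of $y$, as required.

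The main obstacle is the careful coordination between the a.e.\ statements produced by the Hopf argument and the Fubini/absolute-continuity computation on the $y$ side: the Hopf constancy only holds on an $m$-full-measure family of leaves, so one must choose $A_{x}$ inside this good family and verify that the holonomy image still meets it. This is precisely where absolute continuity of the holonomy (and not just of the foliation) is decisive, and where the hypothesis $r>1$ is essentially used.
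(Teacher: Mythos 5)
Your proof is correct, and it reconstructs from scratch the argument that the paper outsources to a citation. Specifically, the paper introduces the Pesin manifolds $W^{\pm}(x)$, observes $W^{ss}(x)\subset W^{-}(x)$ and $W^{uu}(x)\subset W^{+}(x)$, and then invokes \cite[Lemma 4.3]{rhrhtu2009} for the fact that for a.e.\ $x\in K$, a.e.\ point of $W^{\pm}(x)$ lies in $K$; the conclusion for \emph{all} $x\in K$ is then drawn from compactness and $\ess(K)=K$, citing also \cite[Corollary B]{zhang}. You instead run the Hopf argument directly on the strong stable foliation (upper-semicontinuous approximation of $\chi_{K}$, constancy of forward Birkhoff averages along uniformly contracting leaves, monotone limit identified with $\chi_{K}$ via invariance of $K$) combined with absolute continuity of the strong stable foliation and its holonomies in the $C^{1+\alpha}$ setting, and then close the argument with the essential-closure hypothesis exactly as the paper does. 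This is the same circle of ideas that underlies the cited lemma, so the two proofs are not really different in spirit; but your version is tighter in one respect: by working with the uniformly contracting strong bundles (which vary continuously and whose holonomies are known to be absolutely continuous) you avoid the extra step of passing from the possibly higher-dimensional, only measurably varying Pesin manifolds $W^{\pm}$ down to the strong manifolds, which is exactly what the lemma statement requires. The trade-off is length: the paper's proof is three lines plus a citation, while yours reproduces the machinery. Both correctly isolate $r>1$ as the hypothesis needed for absolute continuity.
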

\begin{proof}
For each $x\in M$ the {\em Pesin manifolds}, are defined by
$$W^{+}(x)=\left\{y\in M: \lim\sup\frac1n\log d(f^{-n}(x),f^{-n}(y))<0\right\}$$
and 
$$W^{-}(x)=\left\{y\in M: \lim\sup\frac1n\log d(f^{n}(x),f^{n}(y))<0\right\}$$
The Pesin manifolds are immersed manifolds, see for instance \cite{pesin}. 
The dimension of $W^{+}(x)$ equals the number of positive Lyapunov exponents, and analogously, the dimension of $W^{-}(x)$ equals the number of negative Lyapunov exponents. 
For $x\in K$ it is easy to check that 
\begin{equation} \label{equation.invariant.manifold}
W^{ss}(x)\subset W^{-}(x)\qquad\text{and} \qquad W^{uu}(x)\subset W^{+}(x)
\end{equation}
 If $K$ is any invariant set with positive measure, then for almost every point $x$ in $K$, almost every $y$ in $W^{+}(x)$ and almost every $z$ in $W^{-}(x)$ belong to $K$. A proof of this known fact may be found, for instance, in \cite[Lemma 4.3]{rhrhtu2009}. \par
 In our case, $K$ is compact, and all points in $K$ are accumulated by positive measure sets of points in $K$; hence $W^{+}(x)\cup W^{-}(x)\subset K$ for all $x\in K$. From (\ref{equation.invariant.manifold}) we get the claim.
\end{proof}
From now on, let us assume that $K=\ess(K)$.  For each $x\in K$, the {\em accessibility class} of $x$ is the set of points that can be joined to $x$ by means of a path that is piecewise tangent to $E^{s}$ or $E^{u}$; that is, the accessibility class is the minimal $s$- and $u$-saturated set that contains $x$. We denote it by $AC(x)$. It follows from Lemma \ref{lemma.K(f,l).su.saturado} that $AC(x)\subset K$. Denote by $O_{K}$ the set of open accessibility classes in $K$. Then 
\begin{proposition}\label{proposition.gamma.laminated}
The accessibility class of  each point in $\Lambda=K\setminus O_{K}$ is an immersed surface that is complete with the intrinsic topology. The angles between $s$- and $u$- leaves are uniformly bounded from below in the compact invariant set $\Lambda$. Moreover, the set $\Lambda$ is a compact lamination. 
\end{proposition}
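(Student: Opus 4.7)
The plan is to decompose the statement into (i) a uniform angle bound between strong bundles on $K$, (ii) a local surface-or-open dichotomy for each accessibility class, (iii) intrinsic completeness of the non-open classes, and (iv) assembly of a global lamination structure on $\Lambda$. By Lemma \ref{lemma.K(f,l).su.saturado} every accessibility class through a point of $K$ is contained in $K$, so the whole analysis is internal to $K$. Since $\dim M = 3$ and $\dim E^{c} = 1$, both $E^{s}$ and $E^{u}$ are one-dimensional and $W^{ss}$, $W^{uu}$ are continuous one-dimensional foliations on $K$ with $C^{1}$ leaves.

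First I would establish the uniform angle bound: on the compact set $K$ the bundles $E^{s}$ and $E^{u}$ are continuous and distinct direct summands of the partially hyperbolic splitting, so by compactness $\angle(E^{s}_x, E^{u}_x) \geq \alpha_{0} > 0$, which restricts to $\Lambda \subset K$. Next, for each $x \in K$ I would define a local chart $\phi_x : (-\eps,\eps)^{2} \to M$ by composing a short $W^{uu}$-motion with a short $W^{ss}$-motion; the angle bound makes $\phi_x$ a $C^{1}$ immersion and $\Sigma_x := \phi_x((-\eps,\eps)^{2}) \subset AC(x) \cap K$. The key local fact is the standard $\dim E^{c} = 1$ accessibility dichotomy: a four-legged $su$-zigzag map into $M$ either takes values inside $\Sigma_x$ (flat holonomy, yielding an immersed surface locally) or its differential at the basepoint spans $T_xM$ (a nontrivial center displacement), in which case the inverse function theorem forces $AC(x)$ to contain a neighborhood of $x$. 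Iterating along $W^{ss}$- and $W^{uu}$-leaves and along $f$-translates then produces either an immersed $C^{1}$ surface structure on $AC(x)$, foliated by the two strong leaves, or openness of $AC(x)$ in $M$. Points of $\Lambda$ are by definition those of the first type.

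Intrinsic completeness for $x \in \Lambda$ follows because the individual strong leaves are complete immersed manifolds inside $K$ (Lemma \ref{lemma.K(f,l).su.saturado}) and the uniform angle bound prevents angular degeneracy of zigzag paths, so a Cauchy sequence in the intrinsic metric has bounded $su$-length and a well-defined limit that can be constructed leg by leg. For the lamination structure, $O_K$ is manifestly open in $K$ (any point in an open accessibility class has an $M$-neighborhood inside that class), so $\Lambda$ is closed, compact, and $su$-saturated. Enlarging each $\phi_x$ to a product chart $\Phi_x : (-\eps,\eps)^{2} \times (-\delta,\delta) \to M$ whose extra coordinate moves along the continuous center direction, saturation forces $\Phi_x^{-1}(\Lambda) = (-\eps,\eps)^{2} \times T_x$ for some closed $T_x \subset (-\delta,\delta)$, and continuity of the strong foliations makes $\Phi_x$ vary continuously with $x$; this is precisely a $C^{0}$ lamination chart. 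The main obstacle is the local dichotomy of step (ii): because the strong foliations are typically only continuous transverse to their leaves, one must carefully implement the inverse function theorem in a non-smooth setting, and the cleanest route is to quantify the center-holonomy of an $su$-loop and use that nonzero center displacement, combined with the two tangent directions of $\Sigma_x$, spans $T_xM$.
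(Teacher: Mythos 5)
Your proposal correctly identifies the three ingredients (angle bound from compactness, local dichotomy between surface and open class, and assembly into a lamination), and the overall architecture resembles the paper's, but there are two concrete gaps in the middle steps that you partly flag but do not resolve, and the paper deals with them by different tools.

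First, you claim that the angle bound ``makes $\phi_x$ a $C^1$ immersion.'' This is not justified: the coordinates of the plaque $W^{su}_\eps(x)$ are built from $su$-zigzags, and while each individual restriction of $\phi_x$ along the $s$- or the $u$-coordinate is $C^r$ (these are pieces of strong leaves), the map is not a priori jointly $C^1$ in $(a,b)$, because the holonomy along $W^{uu}$ carrying $W^{ss}_\eps(x)$ to a nearby $s$-leaf is only H\"older continuous in general. The angle bound gives injectivity and a topological embedding, not smoothness. The paper fills precisely this gap with Journ\'e's regularity lemma \cite{journe}: since the patch has a coordinate system made of two transverse families of $C^r$ curves, and the immersion is $C^r$ along each family, Journ\'e's theorem upgrades the plaque to a $C^1$ surface. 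Without this step the phrase ``immersed surface'' is not established.

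Second, and related, your dichotomy step invokes ``the inverse function theorem'' to conclude that a nontrivial center holonomy forces $AC(x)$ to contain a neighborhood of $x$, while simultaneously acknowledging that the relevant map is only continuous transverse to the leaves. The inverse function theorem simply does not apply to a continuous map; one needs a topological argument instead. The paper's proof does exactly this: for every $z$ near $x$ the set $W^{su}_\eps(z)$ is a topological $(n-1)$-manifold separating the ball $B_\delta(x)$ (shown via a transverse product chart $\varphi : \R^s \times \R \times \R^u \to B_\delta(x)$ adapted to an extension of the $u$-foliation). If $W^{su}_\eps(x)$ failed to contain the local stable leaf of one of its points $z$, then $W^{su}_\eps(z)$ would cut a local center curve $[x,w]^c$ at a point $w \ne x$; but then, for every $\xi \in [x,w]^c$, the separating set $W^{su}_\eps(\xi)$ meets $W^{su}_\eps(x)$ and hence lies in $AC(x)$, and the continuity of $\xi \mapsto h_\xi$ shows that the union of these sets has nonempty interior, contradicting $x \in \Lambda$. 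This separation argument is what replaces your inverse function theorem and is the key missing idea in your sketch. Once that purely topological step is in place, your remarks on completeness (uniform patch size from the angle bound) and on building lamination charts by transverse saturation are essentially the paper's.
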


\begin{proof}
As a first step, let us see that a point $x$ is in $O_{K}$ if and only if its accessibility class $AC(x)$ has non-empty interior. Take an open subset $U$ of $AC(x)$, and consider any point $y$ in $U$, and $z\in AC(x)$. Then, by definition, there is an $su$-path from $y$ to $z$, that is, a finite concatenation $s_{1}, u_{1},\dots, s_{k}, u_{k}$ of $s$- and $u$- leaves, having one end-point in $y$ and the other one in $z$. Take the set of all $s$- leaves through points in $U$. This is an open set, due to the continuity of the holonomies. Call this set $U_{1}$. Now consider all $u$-leaves through points in $U_{1}$. This is also an open set, contained in $AC(x)$. Defining inductively the open sets $U_{i}$, and considering successively the corresponding $s$- and $u$-leaves, we obtain that $z$ belongs to $U_{2k}$, which is an open set contained in the accessibility class of $x$, $AC(x)$. This proves that all points in $AC(x)$ are interior points, hence $x$ is in $O_{K}$, the set of points with open accessibility classes. See also \cite{rhrhu2008}.\par
 Now, let $x$ be a point in $\Lambda$, and consider the (closed) local stable manifold $W^{ss}_{\eps}(x)\subset W^{ss}(x)$. For each $y$ in the local stable manifold of $x$, take its (closed) local unstable manifold $W^{uu}_{\eps}(y)\subset W^{uu}(y)$. We call this set $W^{su}_{\eps}(x)$. $W^{su}_{\eps}(x)$ is contained in $AC(x)$ and is a topological $(n-1)$-manifold that separates a small ball around $x$ in two connected components.  To see this, recall that due to the Stable Manifold Theorem \cite{HPS}, $W^{ss}_{\eps}(x)$ is an embedding $\psi$ of a closed $s$-dimensional disc of radius 1, and for each $y\in W^{ss}_{\eps}(x)$, the set $W^{uu}_{\eps}(y)$ is an embedding $\phi_{y}$ of a closed $u$-dimensional disc of radius 1, the embeddings vary in a H\"older continuous way with respect to $y$. 
 The map $h_{x}:D^{s}\times D^{u}\to M$ such that $h_{x}(s,u)=\phi_{\psi(s)}(u)$ is continuous and injective. Since $D^{s}\times D^{u}$ is compact, $h_{x}$ is a homeomorphism onto its image. \par
Since $z\mapsto h_{z}$ is continuous, there exists $\delta>0$ such that for all $z\in B_{\delta}(x)$, the set $W^{su}_{\eps}(z)$ separates $B_{\delta}(x)$ in (at least) two connected components. Indeed, take a $C^{1}$ $(s+1)$-manifold $W^{s+1}(x)$ containing $W^{s}(x)$ and transverse to the leaves $W^{u}$. Extend $W^{u}$ to a continuous foliation in a small ball $B_{\delta}(x)$. Define a homeomorphism $\varphi:\R^{s}\times\R\times\R^{u}\to B_{\delta}(x)$ so that $\R^{s}\times \R$ parametrizes $W^{s+1}(x)$,  and $\varphi(0,0,0)=x$, in the natural way, that is, so that the image for each fixed $(s_{0},c_{0})$, 
$\varphi(s_{0},c_{0}, \R)$ is  $W^{u}(\varphi(s_{0},c_{0},0))$. \par
Since $\varphi$ is a homeomorphism, and $\R^{s}\times\{0\}\times\R^{u}$ separates $\R^{n}$, we get that $\varphi(\R^{s}\times\{0\}\times\R^{u})=W^{su}_{\eps}(x)$ separates $B_{\delta}(x)$.\par
\begin{figure}[h]
 \includegraphics[width=0.8\textwidth]{acclass4}
 \caption{\label{figure.gamma.laminada} A point $z$ such that $W^{ss}_{\eps}(z)$ (the dark line) is not contained in $W^{su}_{\eps}(x)$}
\end{figure}
 Let us see that $W^{su}_{\eps}(x)$ contains all its local stable leaves in a small neighborhood of $x$. Take $z\in B_{\delta}(x)$ in the local unstable manifold $W^{uu}_{\eps}(y)$, where $y\in B_{\delta}(x)$ belongs to the local stable leaf of $x$, and consider its local stable manifold $W^{ss}_{\eps}(z)$. We claim that $W^{ss}_{\eps}(z)\cap B_{\delta}(x)$ is contained in the connected set $W^{su}_{\eps}(x)$. 
 If it were not the case, the surface $W^{su}_{\eps}(z)$ would be as in 
 Figure \ref{figure.gamma.laminada}, and it would cut a local center leaf of $x$, $W^{c}_{\eps}(x)$\footnote{Due to Peano, there exist local integral curves to the line field $E^{c}$. These curves may be non-unique, we call them local center curves.} in (at least) a point different from $x$, call it $w$. We may assume that the segment $[x,w]^{c}\subset W^{c}_{\eps}(x)$ is contained in $B_{\delta} (x)$. The two $(n-1)$ manifolds $W^{su}_{\eps}(x)$ and $W^{su}_{\eps}(z)$ contain the local unstable manifold $W^{uu}_{\eps}(y)$, and each of them separates the ball $B_{\delta}(x)$.  For each $\xi\in[x,w]^{c}$ 
 the set $W^{su}_{\eps}(\xi)$ separates $B_{\delta}(x)$, and hence intersects $W^{su}_{\eps}(x)$. Therefore $W^{su}_{\eps}(\xi)\subset AC(x)$ for each $\xi\in [x,w]^{c}$. The continuity of $\xi\mapsto h_{\xi}$ implies that $AC(x)$ has non-empty interior and, as we have show above,  $AC(x)$ is open. The point $x$ would not be in $\Lambda$, a contradiction.\par
 The paragraph above shows that the patches $W^{su}_{\eps}(x)$ have as a coordinate system the local stable and unstable leaves. The immersion of the patches in the manifold are $C^{r}$ when restricted along these coordinates. Then a lemma by Journ\'e \cite{journe} \footnote{The hypothesis $r>1$ is not being used in this part of the proof. This argument also holds for $r=1$} implies that the patches are $C^{1}$. The angles between the local stable and unstable leaves are bounded, due to the compactness of $K$. This implies that the size of the patches $W^{su}_{\eps}(x)$ are uniformly bounded from below. All these arguments together yield that $AC(x)$ is a complete immersed surface, for each $x\in \Lambda$. The continuity of the stable and unstable leaves implies that $\Lambda$ is, in fact, a lamination.
\end{proof}

There are two possibilities for the compact lamination $\Lambda=K\setminus O_{K}$:
\begin{enumerate}
 \item $\Lambda$ contains a compact leaf
 \item $\Lambda$ contains no compact leaves.
\end{enumerate}
Let us treat them separately:
\subsection{The lamination $\Lambda$ contains a compact leaf}
Consider the set of all compact leaves of $\Lambda$. H\"afliger proves in \cite{haefliger} that this set is also a compact lamination, call it $\mathcal T$. This lamination is also invariant. All leaves in $\mathcal T$ are compact manifolds subfoliated by $W^{ss}$ and $W^{uu}$. \par

\begin{definition}\label{definition.boundary.leaf}
Given a compact lamination $\Lambda$ contained in a partially hyperbolic set, a leaf $L\in\Lambda$ is called a {\em boundary leaf} if there exists a  local center segment $[x,w]^{c}\subset W^{c}(x)$, such that $x\in L$ and $(x,w]^{c}\cap\Lambda=\emptyset$.
\end{definition}

\begin{lemma}\label{lemma.periodic.tori}
 If $\mathcal T$ is the compact invariant lamination consisting of the compact leaves of $\Lambda$, then there is a boundary leaf of $\mathcal T$ that is periodic. 
\end{lemma}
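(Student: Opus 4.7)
The plan is to produce an $f$-periodic boundary leaf of $\mathcal T$ by a two–stage volume pigeonhole: first locate a periodic connected component of $M\setminus\mathcal T$, then extract a periodic leaf on its boundary by pigeonholing center collars.

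Boundary leaves of $\mathcal T$ exist to start with. Since $\mathcal T\subset\Lambda\subset K$ is a compact codimension-one lamination and $m(K)<1$, the set $\mathcal T$ has Lebesgue measure zero, so $M\setminus\mathcal T$ is open and nonempty. Near any point of $\mathcal T$ the lamination admits a local product chart $L\times C$, where $C$ is a proper closed subset of a local center arc; the endpoints of the nonempty complementary intervals of $C$ yield boundary leaves in the sense of Definition \ref{definition.boundary.leaf}. The set $\mathcal B$ of boundary leaves is $f$-invariant, since $f$ preserves $\mathcal T$ and sends local center arcs to local center arcs ($E^{c}$ is $Df$-invariant).

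Next I locate a periodic component of $M\setminus\mathcal T$. Its (at most countably many) connected components $\{W_i\}_{i\in I}$ are $f$-permuted with each $m(W_i)$ preserved, and $\sum_i m(W_i)=m(M)-m(\mathcal T)=1$. In particular, only finitely many components attain the maximum value $v_{\max}:=\sup_i m(W_i)$, and this finite family is $f$-invariant; so there exist $W^{*}$ in it and $k\ge1$ with $f^{k}(W^{*})=W^{*}$.

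To produce a periodic boundary leaf on $\partial W^{*}$ I pigeonhole a second time, on the leaves of the compact $f^{k}$-invariant set $\partial W^{*}\cap\mathcal T$, via the volume of their center collar in $W^{*}$. For each leaf $L\subset\partial W^{*}$ and each $x\in L$, the local product structure of $\mathcal T$ at the compact leaf $L$ (a lamination analog of Epstein–Reeb stability for compact leaves) provides a first positive time $\tau(x)>0$ at which a local center curve from $x$ into $W^{*}$ meets $\mathcal T$ again. The resulting center collar
\[
S(L)=\{\varphi^{c}_{t}(x):x\in L,\ 0<t<\tau(x)\}\subset W^{*}
\]
is open of positive $m$-measure, and the family $\{S(L):L\subset\partial W^{*}\}$ covers $W^{*}$ with multiplicity at most two, so $\sum_{L} m(S(L))\le 2\,m(W^{*})<\infty$. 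Only finitely many boundary leaves therefore achieve the maximal value of $m(S(L))$; since $f^{k}$ maps $S(L)$ to $S(f^{k}(L))$ while preserving $m$, this finite collection is $f^{k}$-invariant, and a further iterate fixes one of its members, producing the desired periodic boundary leaf.

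The main obstacle is the final step, namely controlling the collars $S(L)$. The center distribution $E^{c}$ is only continuous so local integral curves are non-unique (Peano), and a priori a center curve through a point of $L$ could remain in $W^{*}$ indefinitely, making $\tau(x)$ ill defined; in addition one must argue that the slabs $S(L)$ overlap with globally bounded multiplicity. Both points are to be resolved using the uniform local product structure of $\mathcal T$ at compact leaves together with compactness of $\overline{W^{*}}$. Should this direct control break down in a pathological configuration, a fallback is a transfinite Cantor–Bendixson argument on $\partial W^{*}\cap\mathcal T$ inside the compact Hausdorff leaf space $X=\mathcal T/\!\sim$ (Hausdorffness coming from the Epstein-type theorem for laminations by compact leaves): the decreasing chain of $f^{k}$-invariant closed subsets of $X$ stabilizes, and the countable $f^{k}$-invariant strata of isolated leaves that arise at each stage can be attacked by re-running the maximum-volume argument of the second step.
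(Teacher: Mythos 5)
The two-stage pigeonhole diverges from the paper's argument, and the second stage has the gap you already flagged — it is a real one, not a technicality. Because $E^{c}$ is merely continuous, a local center curve into $W^{*}$ through a point of a boundary leaf need not ever re-enter $\mathcal T$ (nor is there a canonical choice of such a curve), so the exit time $\tau(x)$ and hence the collar $S(L)$ are not well-defined objects; and the "multiplicity at most two" bound on the $S(L)$ is asserted without any mechanism to prevent collars attached to distinct boundary leaves of $\partial W^{*}$ from piling up, since $\partial W^{*}$ can contain infinitely many leaves accumulating on each other. The Cantor--Bendixson fallback does not repair this, as it ultimately hands you back a countable $f^{k}$-invariant collection on which the same unproved collar estimate would again be needed. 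A smaller point: the identity $\sum_i m(W_i)=1$ presumes $m(\mathcal T)=0$, which is not established; fortunately your step 1 only needs $\sum_i m(W_i)>0$, which does follow from $m(K)<1$ since $\mathcal T\subset K$.

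The paper avoids your step 2 entirely by making a better choice of open set to recur. It first disposes of the case where $\mathcal T$ has only finitely many boundary leaves ($f$ permutes them, so each is periodic). If there are infinitely many boundary leaves, it selects, instead of a maximal-volume component, a \emph{thin gap} $V$ between two boundary leaves $L_1,L_2$ that are Hausdorff-close and bound a region disjoint from $\mathcal T$; such a gap exists because infinitely many pairwise-disjoint gaps have total volume at most one, forcing arbitrarily thin ones. Poincar\'e recurrence gives $n$ with $f^{n}(V)\cap V\neq\emptyset$; since $f^{n}(V)$ is again a complementary component, this forces $f^{n}(V)=V$, and then $f^{2n}$ fixes both $L_1$ and $L_2$. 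The point you missed is precisely that one should recur an open region whose boundary consists of \emph{exactly two} leaves, which makes the "periodic region $\Rightarrow$ periodic boundary leaf" step trivial and removes any need for a second pigeonhole.
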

\begin{proof}
If $\mathcal T$ has only a finite number of boundary leaves, then the result follows, since boundary leaves go into boundary leaves under the action of $f$. \par
 Suppose $\mathcal T$ has infinitely many boundary leaves. This implies there exists two boundary leaves $L_{1}$ and $L_{2}$ such that $L_{i}\subset B_{\eps}(L_{j})$ for $i,j=1,2$ and arbitrarily small $\eps>0$. They are such that the open set $V$ in between $L_{1}$ and $L_{2}$ does not intersect $\mathcal T$. Since $f$ is conservative, there exists $n>0$ such that $f^{n}(V)\cap V\ne\emptyset$.  For this $n>0$, we have in fact that $f^{n}(V)=V$, for otherwise $V\cap{\mathcal T}\ne\emptyset$. This implies that $L_{1}$ and $_{2}$ are invariant under $f^{2n}$ . 
\end{proof}
As a consequence of Lemma \ref{lemma.periodic.tori},  there exists a periodic manifold $L$ everywhere tangent to $E^{ss}\oplus E^{uu}$. It follows that $f|_{L}$ is Anosov. The non-wandering set of $f|_{L}$ contains infinitely many periodic points. Since $L$ is compact, there are arbitrarily close periodic points that are heteroclinically related inside $L$, that is there are $p,q\in L\subset K$ such that 
$$W^{ss}(q)\cap W^{uu}(p)\ne\emptyset\qquad\text{and}\qquad W^{uu}(q)\cap W^{ss}(p)\ne \emptyset$$
and the intersection is quasi-transverse. 
 This proves Theorem \ref{theorem.dificil} in the case that $\Lambda$ contains a compact leaf. 
\subsection{No leaf of $\Lambda$ is compact} Any connected component of $M\setminus \Lambda$ is called a {\em complementary region}.  $f$ takes complementary regions into complementary regions. Since $f$ is conservative, each complementary region is periodic. For simplicity, let us assume there is a complementary region and a boundary leaf $\Gamma$ that is fixed. \par
A {\em closed complementary region} is a  complementary region completed with respect to the induced path metric or, equivalently, a complementary region together with its boundary leaves. 
 If $\Lambda$ does not contain compact leaves, then each closed complementary region decomposes
into a compact piece, called {\em gut}, and $I$-bundles \footnote{An $I$-bundle is a fiber bundle whose fiber is an interval and whose base is a manifold. In our case the interval is compact.} over a non-compact manifold, called {\em interstitial regions}.  One can take the interstitial regions to be as thin as one wishes. They get thinner and thinner as they go away
from the gut. The decomposition into interstitial regions and guts is unique up to isotopy  (see \cite{gabai_kazez, hector_hirsch, rhrhu2008_2}). See Figure \ref{figure gut}. \par
\begin{figure}[h]
\includegraphics[width=0.5\textwidth]{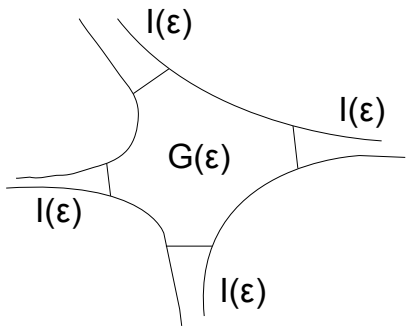} 
\caption{\label{figure gut} Decomposition into gut and interstitial regions}
\end{figure}

The guts and interstitial regions are obtained in the following way. Fix a small $\eps>0$. For each $x$ in a complementary region, consider a foliation box of the form $B_{x}=(-\eps,\eps)\times D$, where $(-\eps,\eps)$ is a segment transverse to the lamination, and $D$ is an $(n-1)$-dimensional disk. The coordinates are chosen so that $x=(0,0)$. The component of $B_{x}\setminus \Lambda$ containing $x$ is of the form $(-a,b)\times D$, with $0< a,b\leq \eps$. The interstitial regions consist of the points such that $a<\eps$ and $b<\eps$, completed with respect to the induced path metric. This is not a compact set. The completion of the points such that either $a=\eps$ or $b=\eps$ is the gut. \par
For each given complementary region, we shall call $I(\eps)$ the union of interstitial regions, and $G(\eps)$ the gut. They intersect in a compact set with empty interior. As it follows from the discussion above, $I(\eps)$ can be made as thin as we wish, by taking sufficiently small $\eps$. More details can be found in \cite{gabai_kazez, hector_hirsch}.

\begin{definition} Let us fix once and for all a boundary leaf $\Gamma$, considered with its intrinsic topology. $\Gamma$ is complete with this topology. For each $N>0$, define
$$I_{N}(\eps)=\{x\in\Gamma: f^{n}(x)\in I(\eps)\quad\forall n\geq N\}$$
 and define $$G_{*}(\eps)=\overline{\Gamma\setminus\bigcup_{N}I_{N}(\eps)}$$
 that is, $G_{*}(\eps)$ is the closure of the points that return infinitely many times to $G(\eps)$. 
\end{definition}
$\Gamma$ is a non-trivial complete set, and it is the countable union of the closed sets $I_{N}(\eps)$ with $N>0$ and $G_{*}(\eps)$ defined above. Hence, by a Baire category argument,  there is at least one these sets that has non-empty interior. \par
Next, we examine each of the possibilities and conclude that there must exist a pair of strongly heteroclinically related periodic points, which ends the proof of Theorem \ref{theorem.dificil}, and then of Theorem \ref{teo.jana}.

\subsubsection{$G_{*}(\eps)$ has non-empty interior:} $G_{*}(\eps)$ is a closed invariant set. By definition of $G_{*}(\eps)$ and compactness of $G(\eps)$, we have  
$\emptyset\ne\omega(x)\cap G(\eps)$ for all $x\in G_{*}(\eps)$. The dynamics of $f|_{G(\eps)}$ is hyperbolic, therefore $\omega(x)$ is the orbit of a periodic point $p$ if and only if $x\in W^{ss}(p)$.  Obviously, $W^{ss}(p)$ need not be contained in $G(\eps)$. Recall that $f|_{\Gamma}$ is hyperbolic. \par
Since $G_{*}(\eps)$ has non-empty interior, there exists $x\in G_{*}(\eps)$ such that $\omega(x)$ is not a periodic point, for otherwise $G_{*}(\eps)$ would be contained in a countable union of stable and unstable manifolds. Take a non-periodic point $y\in\omega(x)\cap G(\eps)$. The Anosov Closing Lemma applies and implies that there is a sequence of periodic points $p_{n}\to y$. Since the convergence is in the intrinsic topology, this implies that there are $p_{n_{1}}$ and $p_{n_{2}}$ in $\Gamma\subset K$ that are strongly heteroclinically related.

\subsubsection{An $I_{N}(\eps)$ has non-empty interior:} 
A {\em pre-lamination} over an invariant subset $\Gamma$ of  $K$ is a continuous choice of $C^{1}$ 1-dimensional discs $V^{c}(x)$ embedded in $M$ through each $x\in\Gamma$. That is, for each $x\in\Gamma$ there exists a $C^{1}$ embedding $V^{c}(x): E^{c}_{x}\cap B_{1}(0)\to M$; the embeddings $x\mapsto V^{c}(x)$ vary continuously, this means that if $x$ and $x'$ are close, then $V^{c}(x)$ and $V^{c}(x')$ are $C^{1}$-close. A pre-lamination $V^{c}(x)$ is {\em locally invariant} if for each $\rho>0$ there exists $\eps>0$ such that if $V^{c}_{\eps}(x)=V^{c}(x)|_{B_{\eps}(0)}$ then $f(V^{c}_{\eps}(x))\subset V^{c}_{\rho}(f(x))$ for all $x\in \Gamma$.

\begin{proposition}\label{proposition.HPS} \cite[Theorem 5.5]{HPS} For any invariant set $\Gamma\subset K$ there is a locally invariant pre-lamination $V^{c}(x)$ over $\Gamma$ such that each $V^{c}(x)$ is tangent to $E^{c}_{x}$ at $x$. 
\end{proposition}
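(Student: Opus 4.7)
The plan is to invoke the invariant section / graph transform technique of Hirsch--Pugh--Shub, using as input the uniform $l$-domination $E^{u}\succ_{l}E^{c}\succ_{l}E^{s}$ on the compact partially hyperbolic set $K$. Since $\Gamma\subset K$, every constant that appears will be uniform on $\Gamma$, regardless of whether $\Gamma$ is closed or even bounded away from $\partial K$.

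First I would pass to an adapted Riemannian metric on a neighborhood of $K$ so that the domination holds with $l=1$, and for each $x\in K$ I would represent $f$ near $x$ via the exponential charts as a map $F_{x}:T_{x}M\supset B_{r}(0)\to T_{f(x)}M$ with $DF_{x}(0)=Df(x)$. Using a smooth bump function I would modify $F_{x}$ outside a slightly smaller ball so that the modified map $\tilde F_{x}$ agrees with $F_{x}$ near the origin and is globally Lipschitz-close to its linear part. The problem is thereby reduced to producing, for each $x\in\Gamma$, a $\kappa$-Lipschitz graph $\sigma_{x}:E^{c}_{x}\cap B_{1}(0)\to E^{s}_{x}\oplus E^{u}_{x}$ with $\sigma_{x}(0)=0$, such that the family $\{\sigma_{x}\}_{x\in\Gamma}$ is continuous and invariant under the graph transform induced by the family $\{\tilde F_{x}\}$.

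Next, let $\mathcal{S}$ be the complete metric space of such continuous families of $\kappa$-Lipschitz graphs equipped with the $C^{0}$-sup norm. The domination yields a $DF_{x}$-invariant cone around $E^{c}_{x}$ inside which any $\kappa$-Lipschitz graph is sent by $\tilde F_{x}^{-1}$ to a $\kappa$-Lipschitz graph over $E^{c}_{x}$, so the graph transform $\mathcal{G}:\mathcal{S}\to\mathcal{S}$ is well defined and, by the same spectral gap, a strict contraction. Its unique fixed point $\sigma^{*}$ yields the continuous pre-lamination $V^{c}(x)$. A fiber contraction argument applied to the bundle of candidate tangent planes along the graphs of $\sigma^{*}$ upgrades the regularity to $C^{1}$ and forces the tangency $T_{x}V^{c}(x)=E^{c}_{x}$, relying on the continuity of $x\mapsto E^{c}_{x}$ on $K$. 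Undoing the bump modification, the invariance of $\sigma^{*}$ under $\mathcal{G}$ translates into exactly the local invariance statement: for any $\rho>0$ there exists $\eps>0$ with $f(V^{c}_{\eps}(x))\subset V^{c}_{\rho}(f(x))$ for all $x\in\Gamma$.

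The main obstacle—and also the reason for introducing pre-laminations in the first place—is that center manifolds are not dynamically defined, so one cannot expect a globally invariant center foliation through the points of $\Gamma$, only locally invariant discs; genuine iteration can take $V^{c}_{\eps}(x)$ out of the zone where $\tilde F_{f^{n}(x)}$ agrees with $F_{f^{n}(x)}$. Compactness of the ambient set $K$ (rather than of $\Gamma$) is what makes the bump construction and the uniform contraction estimates go through, while the $1$-dimensionality of $E^{c}$ is not actually used at this stage, although it simplifies the subsequent geometric arguments in the rest of the section.
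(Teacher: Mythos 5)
The paper does not prove this result; it cites it verbatim from Hirsch--Pugh--Shub \cite{HPS}, Theorem 5.5. So the question is whether your reconstruction of that theorem holds up, and there is a genuine gap at the central step.

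You claim that ``the domination yields a $DF_{x}$-invariant cone around $E^{c}_{x}$ inside which any $\kappa$-Lipschitz graph is sent by $\tilde F_{x}^{-1}$ to a $\kappa$-Lipschitz graph over $E^{c}_{x}$.'' This is false: a cone around the middle bundle alone is neither forward nor backward invariant. The domination $E^{u}\succ E^{c}$ means $Df$ pushes directions near $E^{c}$ toward $E^{u}$, while the domination $E^{c}\succ E^{s}$ means $Df^{-1}$ pushes directions near $E^{c}$ toward $E^{s}$. Concretely, if $v=v_{s}+v_{c}+v_{u}$ lies in a thin cone around $E^{c}_{x}$ with $v_{s}\ne 0$, then the $E^{s}$-component of $Df^{-l}(x)v$ grows by at least a factor of $2$ relative to the $E^{c}$-component, so the backward graph transform does not preserve the $\kappa$-Lipschitz class over $E^{c}$. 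There is no single contraction on graphs over $E^{c}$ that closes up, and this is precisely the difficulty that makes center plaques different from stable or unstable manifolds.

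The standard remedy, and the route to Theorem 5.5 of \cite{HPS}, is to run the graph transform on the two extremal subbundles where the cone argument is sound. The cone around $E^{cs}=E^{s}\oplus E^{c}$ is backward invariant because $E^{u}\succ_{l} E^{cs}$, and the backward graph transform contracts on $\kappa$-Lipschitz graphs over $E^{cs}$, producing a locally invariant pre-lamination $V^{cs}(x)$ tangent to $E^{cs}_{x}$. Symmetrically, the cone around $E^{cu}=E^{c}\oplus E^{u}$ is forward invariant and the forward graph transform yields $V^{cu}(x)$ tangent to $E^{cu}_{x}$. One then sets $V^{c}(x)=V^{cs}(x)\cap V^{cu}(x)$, a transverse intersection of uniformly Lipschitz discs, which inherits continuity in $x$, local invariance, and $C^{1}$ regularity with $T_{x}V^{c}(x)=E^{c}_{x}$ from the fiber-contraction argument applied to each of $V^{cs}$ and $V^{cu}$. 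The rest of your scaffolding --- adapted metric, exponential charts, bump localization, uniformity coming from compactness of $K$ rather than of $\Gamma$, and the observation that $\dim E^{c}=1$ is not needed here --- is correct; it is only the one-sided graph transform on a cone around $E^{c}$ that must be replaced by this two-sided construction and intersection.
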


Note that the locally invariant pre-lamination of Proposition \ref{proposition.HPS} is not necessarily made of local center curves. Namely, $V^{c}(x)$ is tangent to $E^{c}(x)$ at $x$, but not necessarily we have that $T_{y}V^{c}(x)=E^{c}(y)$ for other points $y\in V^{c}(x)$. In the sequel we assume that $\eps>0$ has been taken to satisfy the locally invariance property $f(V^{c}_{\eps}(x))\subset V^{c}_{\rho}(f(x))$ for some small $\rho>0$ such that $V^{c}_{\rho}(x)\cap I(\eps)\subset V^{c}_{\eps}(x)$ for all $x\in I(\eps)$. \par
For every $x\in I(\eps)\cap \Gamma$, $V^{c}(x)$ intersects $\Lambda$ at least twice, see Figure \ref{figura abierto I(eps)}. Let us see that if $\eps>0$ is small enough, for every $x_{0}$ in the interior of $I(\eps)\cap\Gamma$ the union of the $V^{c}(x)$ through a small plaque $W^{su}_{\eta}(x_{0})$ contains a small neighborhood of $x_{0}$ in $M$.  
\begin{figure}[h]
 \includegraphics[width=0.7\textwidth]{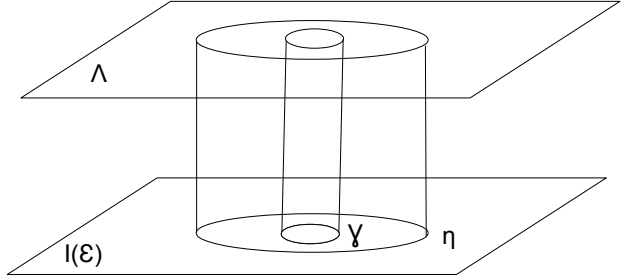}
 \caption{\label{figura abierto I(eps)} An open neighborhood of $x_{0}$ made of $V^{c}(x)$, with $x\in W^{su}_{\eta}(x_{0})$}
\end{figure}
\par
Given $x_{0}\in I(\eps)\cap \Gamma$, take $\eta>0$ such that for all $x\in \partial W^{su}_{\eta}(x_{0})$, we have $V^{c}(x)\cap V^{c}(x_{0})=\emptyset$. This is possible if $\eps>0$ is small, due to the transversality of $E^{c}(x_{0})$ and $E^{ss}\oplus E^{uu}$, (recall that, due to compactness of $K$, the angles are uniformly bounded from below). In fact, we may take a small $\gamma>0$ such that all $x$ as before, satisfy $V^{c}(x)\cap B_{\gamma}(V^{c}(x_{0}))=\emptyset$, see Figure \ref{figura abierto I(eps)}. For each $x\in W^{su}_{\eta}(x_{0})$, call $\Psi(x)$ the first intersection of $V^{c}(x)$ with $\Lambda$ that is not in $I(\eps)$. $\Psi$ is continuous; therefore,  $\Psi(\partial W^{su}_{\eta}(x_{0}))$ is homotopic to a point in $\Lambda$.  Also $\Psi(\partial W^{su}_{\eta}(x_{0}))$ does not intersect $B$, the connected component of $B_{\gamma}(\Psi(x_{0}))\cap \Lambda$ containing $\Psi(x_{0})$, by our choice of $\gamma$. Hence $\Psi(W^{su}_{\eta}(x_{0}))$ contains $B$, see Figure \ref{figura abierto I(eps)}.  This implies that the union of all $V^{c}(x)$, with $x\in W^{su}_{\eta}(x_{0})$ contains a small neighborhood of $x_{0}$ in $M$ with the ambient topology.\par

Assume that there is $N>0$ such that the interior of $I_{N}(\eps)$ is non-empty.  Consider a small $su$-plaque $W^{su}_{\eta}(x_{0})$ contained in $f^{N}(I_{N}(\eps))$, satisfying the conditions above. The union of all $V^{c}(x)$ with $x\in W^{su}_{\eta}(x_{0})$ contains an open neighborhood $W$ of $x_{0}$ in $M$. Note that, by our choice of $\eps>0$, we have $f(V^{c}_{\eps}(x))\subset V^{c}_{\rho}(f(x))\cap I(\eps)\subset V^{c}_{\eps}(f(x))$, for all $x\in I(\eps)\cap \Gamma$. By induction, we have for every $x\in f^{N}(I_{N}(\eps))$, for all $n\geq 0$
\begin{equation}\label{equation local invariance}
f^{n}(V^{c}_{\eps}(x))\subset V^{c}_{\eps}(f^{n}(x))
\end{equation}
Since $f$ is conservative, there is a recurrent point $y$ in $W$. By definition of $W$, $y\in V^{c}(y_{*})$, with $y_{*}\in W^{su}(x_{0})$. The point $y_{*}$ is recurrent too. Indeed, for arbitrarily large $n>0$, $f^{n}(y)\in W$, and by definition of $W$, $f^{n}(y)\in V^{c}(y^{n}_{*})$, with $y^{n}_{*}\in W^{su}(x_{0})$. The local invariance property (\ref{equation local invariance}) implies that  $y^{n}_{*}=f^{n}(V^{c}_{\eps}(y_{*}))\cap\Gamma=V^{c}_{\eps}(f^{n}(y*))\cap\Gamma=f^{n}(y_{*})$. The continuity of the pre-lamination $V^{c}(x)$ implies that $f^{n_{k}}(y_{*})\to y_{*}$, therefore $y_{*}\in W^{su}(x_{0})$ is a recurrent point. Since the dynamics of $f$ on $\Gamma$ is hyperbolic, the Anosov closing lemma implies there is a sequence of periodic points converging to $y_{*}$ in the relative topology. This implies that for close enough periodic points, they are strongly heteroclinically related.
This finishes the proof.

\end{document}